\newtheorem{thm}{Theorem}[section]
\newtheorem{dfn}[thm]{Definition}
\newtheorem{prop}[thm]{Proposition}
\newtheorem{lm}[thm]{Lemma}
\newtheorem{cor}[thm]{Corollary}
\newtheorem{remark}[thm]{Remark}
\theoremstyle{definition}
\newcommand{\red}{\textcolor{red}}
\newcommand{\green}{\textcolor{green}}
\newcommand{\blue}{\textcolor{blue}}
\title{Minimizing length of billiard trajectories in hyperbolic polygons}
\author{John R. Parker, Norbert Peyerimhoff, and Karl Friedrich Siburg}
\begin{document}

\maketitle

\begin{abstract}
  Closed billiard trajectories in a polygon in the hyperbolic plane
  can be coded by the order in which they hit the sides of the
  polygon. In this paper, we consider the average length of cyclically
  related closed billiard trajectories in ideal hyperbolic polygons
  and prove the conjecture that this average length is minimized for
  regular hyperbolic polygons. The proof uses a strict convexity
  property of the geodesic length function in Teichm\"uller space with
  respect to the Weil-Petersson metric, a fundamental result
  established by Wolpert.
\end{abstract}

\tableofcontents

\section{Introduction}

To play billiards in a Euclidean polygon, the rules are as follows: An
infinitesimal ball travels along a straight line (geodesic) at
constant speed, and when it hits a side of the polygon then it changes
its direction so the angle of incidence agrees with the angle of
reflection. The path followed by such a ball is called a {\em billiard
  trajectory}.

It also makes sense to play billiards in a hyperbolic polygon, as here
we also have well-defined meanings of geodesics and angles of
incidence and reflection. To our knowledge, the first instance where
such a dynamical system was considered is in an article by E. Artin
\cite{Art} written in German (see \cite{Dou} for an English
translation). Using continued fractions, he constructs dense
bi-infinite billiard trajectories in half of the fundamental polygon
of the modular surface. In fact, there are many striking connections
between geodesics on the modular surface, their symbolic coding via
cutting sequences and number theory such as binary quadratic forms and
continued fractions (see \cite{Ser} for a well known classical
reference and also, e.g., \cite{BourKont} for very recent
developments).

\begin{figure}[h]
\begin{center}
 \psfrag{DD}{${\mathbb D}^2$}
 \psfrag{P}{${\bf P}$}
 \includegraphics[width=11cm]{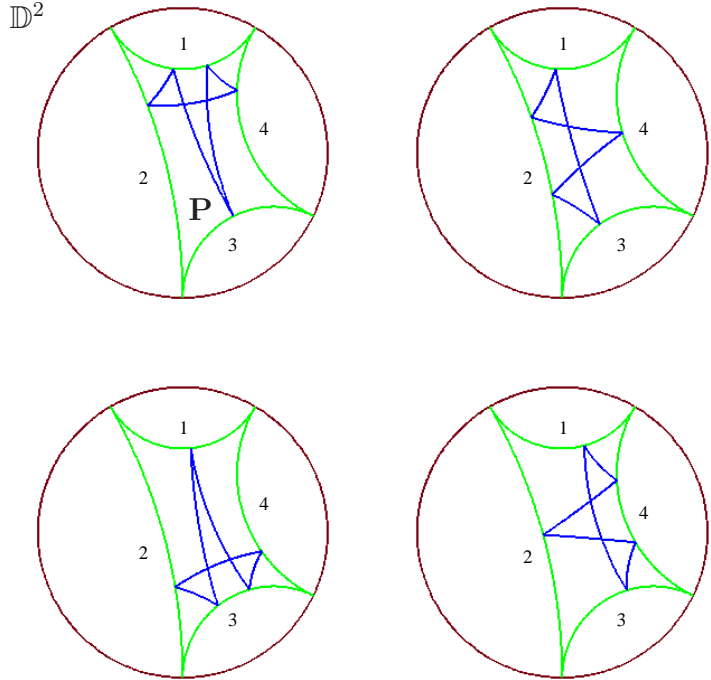}
 \caption{Illustration of all closed billiard trajectories in an ideal
   quadrilateral in the Poincar{\'e} disk ${\mathbb D}^2$ which are
   cyclically related to $(1,2,4,1,3)$. Their billiard
   sequences are $(2,3,1,2,4)$, $(3,4,2,3,1)$ and
   $(4,1,3,4,2)$. \label{fig:cycrelillus}}
\end{center}
\end{figure}

A billiard trajectory is said to be {\em closed} if after a finite
time it returns to the same point with the same direction. A natural
setting is to consider closed billiard trajectories in ideal
hyperbolic polygons where all vertices lie on the boundary at
infinity. A key piece of information of a billiard trajectory in such
an ideal hyperbolic polygon is its {\em billiard sequence} obtained by
recording the order of the sides where the ball hits the
boundary. Unlike the Euclidean case, where there may be uncountably
many closed billiard trajectories, although they are homotopic, with
the same billiard sequence, in the hyperbolic case there is at most
one (which is a consequence of the fact that the curvature is strictly
negative). In a polygon with $k$-sides, there are $k$ different {\em
  anti-clockwise enumerations} of the polygon's sides with labels
$1,2,\dots,k$. For each such labelling of the polygon, a closed
billiard trajectory $\gamma$ gives rise to a finite sequence of these
numbers, which we call the billiard sequence of $\gamma$ with respect
to this labelling. Two billiard trajectories in a given ideal polygon
are said to be {\em cyclically related} if, under different
anti-clockwise enumerations of the polygon's sides, they yield the
same billiard sequences. Figure \ref{fig:cycrelillus} illustrates this
concept: Our hyperbolic polygon there is an ideal quadrilateral in the
Poincar{\'e} unit disc ${\mathbb D}^2$ and the original closed
billiard trajectory has the finite billiard sequence $(1,2,4,1,3)$.

A closed billiard trajectory has a well-defined {\em hyperbolic
  length}. Given two different ideal hyperbolic $k$-gons with
anti-clockwise labellings of their sides, we can compare billiard
trajectories in both polygons with the same billiard sequences. For
closed billiard trajectories with a given billiard sequence, it is
interesting to ask for which polygons this length is minimised. A
first conjecture may be that the minimising polygon is the regular
polygon, i.e., the polygon whose symmetry group is the full dihedral
group. However, it is easy to see that this is actually not the
case. Indeed, there are many billiard sequences for which we can find
polygons whose corresponding trajectories have arbitrary small
lengths. Therefore, we consider {\em families of all cyclically
  related} closed billiard trajectories in a polygon and their
averaged lengths. A conjecture in \cite{CPS} states for {\em ideal}
hyperbolic polygons that the {\em average length function} of any
family of cyclically related closed billiard trajectories is uniquely
minimised by the regular polygon. Note that in a regular polygon,
cyclically related billiard trajectories are just rotations of each
other about the centre of the polygon and that all of them have the
same length. The aim of this paper is to confirm this conjecture, that is,
to prove the following result:

\begin{thm} \label{thm:main} In anti-clockwise labelled ideal
  hyperbolic polygons with $k \ge 3$ sides, the {\em average length
    function} of any family of cyclically related closed billiard
  trajectories corresponding to a given billiard sequence is uniquely
  minimised by the regular polygon.
\end{thm}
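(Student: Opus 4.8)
The plan is to recast the problem in Teichm\"uller theory and then invoke Wolpert's convexity theorem.

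\emph{Step 1: doubling.} Reflecting an ideal $k$-gon $P$ across its boundary produces a complete finite-area hyperbolic surface $DP$ homeomorphic to a $k$-times punctured sphere $\Sigma_{0,k}$, carrying an orientation-reversing isometric involution $\tau$ with $DP/\tau=P$. The moduli space of anti-clockwise labelled ideal $k$-gons (up to isometry) is thereby identified with the ``ideal polygon component'' of the fixed-point locus $\mathcal{T}_{\mathbb R}:=\mathrm{Fix}(\tau_*)\subset\mathcal{T}_{0,k}$ of the induced anti-holomorphic involution $\tau_*$ on Teichm\"uller space; this component is connected, diffeomorphic to $\mathbb{R}^{k-3}$, and, because $\tau_*$ is an anti-holomorphic isometry of the K\"ahler Weil--Petersson metric, it is a \emph{totally geodesic} submanifold for that metric. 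A closed billiard trajectory in $P$ unfolds, under the doubling, to a closed geodesic on $DP$ whose length is a fixed positive multiple of the billiard length, the multiple depending only on the combinatorial length of the billiard word.

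\emph{Step 2: the averaged length as a sum of geodesic length functions.} A billiard sequence $w$ determines a free homotopy class of closed curve $c_w$ on $\Sigma_{0,k}$ (the class of the unfolded trajectory), and the billiard length in $P$ of the trajectory with sequence $w$ equals, up to the universal multiple above, the geodesic length function $\ell_{c_w}$ evaluated at the point $x\in\mathcal{T}_{\mathbb R}$ corresponding to $P$. The cyclic group $G=\mathbb Z/k$ acts on $\Sigma_{0,k}$ by cyclically permuting the punctures; this realizes an order-$k$ element of the mapping class group, so the induced action on $\mathcal{T}_{0,k}$ is by Weil--Petersson isometries, and it preserves $\mathcal{T}_{\mathbb R}$. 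The point of the $k$ anti-clockwise re-labellings is precisely that, with $g$ a generator of $G$, the collection $\{g_*^{\,i}c_w\}_{i=0}^{k-1}$ is the set of homotopy classes carried by the family of cyclically related trajectories, and $\ell_{g_*^{\,i}c_w}(x)=\ell_{c_w}(g^{-i}\!\cdot x)$. Hence the normalized average length of that family in $P$ is, up to the universal positive multiple,
\[
  F(x)\;=\;\frac1k\sum_{i=0}^{k-1}\ell_{c_w}\!\bigl(g^{\,i}\!\cdot x\bigr)\;=\;\frac1k\sum_{i=0}^{k-1}\ell_{g_*^{\,i}c_w}(x),
\]
a manifestly $G$-invariant function on $\mathcal{T}_{\mathbb R}$.

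\emph{Step 3: convexity and the critical point.} By Wolpert's theorem each $\ell_{c}$ is strictly convex along Weil--Petersson geodesics of $\mathcal{T}_{0,k}$, hence, $\mathcal{T}_{\mathbb R}$ being totally geodesic, along Weil--Petersson geodesics of $\mathcal{T}_{\mathbb R}$; a finite average of strictly convex functions is strictly convex, so $F$ is strictly convex along Weil--Petersson geodesics of $\mathcal{T}_{\mathbb R}$. The regular ideal $k$-gon $P^\ast$ is the unique $G$-fixed point of (the ideal polygon component of) $\mathcal{T}_{\mathbb R}$: a cyclic configuration of ideal vertices invariant under a M\"obius transformation of order $k$ is M\"obius-equivalent to the equally spaced one. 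Since $F$ is $G$-invariant and $g$ is an isometry fixing $P^\ast$, the Weil--Petersson gradient $\nabla F(P^\ast)$ is a $G$-fixed vector of $T_{P^\ast}\mathcal{T}_{\mathbb R}$. But $T_{P^\ast}\mathcal{T}_{\mathbb R}$ is obtained from the permutation representation $\mathbb R^k$ of $G$ (perturbing the ideal vertices along the boundary circle) by quotienting out the three-dimensional space of M\"obius deformations, which is itself a $G$-subrepresentation, equal to the trivial summand plus the first rotation summand; the quotient therefore contains no nonzero $G$-fixed vector for $k\ge 3$. Hence $\nabla F(P^\ast)=0$, so $P^\ast$ is a critical point of the strictly convex function $F$, and consequently its unique global minimum: for any $Q\neq P^\ast$ the Weil--Petersson geodesic from $P^\ast$ to $Q$ exists and stays in $\mathcal{T}_{\mathbb R}$ by geodesic convexity of $\mathcal{T}_{0,k}$ and total geodesy of the real slice, and $F$ restricted to it is strictly convex with vanishing derivative at $P^\ast$, whence $F(Q)>F(P^\ast)$. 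This proves Theorem~\ref{thm:main}.

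\emph{Main obstacle.} The delicate input is the convexity statement in the generality required: a geodesic billiard trajectory may self-intersect, so the doubled curves $c_w$ need not be \emph{simple}, whereas Wolpert's classical convexity is usually stated for simple closed geodesics; one must therefore use the extension of strict Weil--Petersson convexity of $\ell_c$ to arbitrary closed geodesics (via positivity of the Weil--Petersson Hessian, or by approximating geodesic currents). Secondary points to pin down are the exact unfolding dictionary together with the uniformity of the length multiple over the cyclically related family, the total geodesy of the real slice, and geodesic convexity of $\mathcal{T}_{0,k}$ in the Weil--Petersson metric --- all standard, but needed in the argument above.
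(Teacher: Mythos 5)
Your proposal is correct in outline and shares the paper's framework for the first two steps --- doubling the polygon to a $k$-punctured sphere, identifying labelled ideal polygons with the fixed locus of the induced anti-holomorphic involution (the paper's billiard space $B(R_0)$, shown to be geodesically convex exactly as you say), and rewriting the averaged billiard length as the $\rho_*$- and flip-invariant length function of the $\mathbb{Z}/k$-orbit of the unfolded class (the paper's identity \eqref{eq:avlength-teichm} and Lemma \ref{lm:inv-length}) --- but it diverges genuinely at the final step. The paper first proves that the cyclically related family is \emph{filling} (Proposition \ref{prop-fill}, a substantial combinatorial argument), invokes Kerckhoff's properness result to obtain existence of a global minimizer on all of $T(R_0)$, gets uniqueness from Wolpert's strict convexity, and then identifies the minimizer as the regular point by showing it is the \emph{only} simultaneous fixed point of $\rho_*$ and $\mathcal{F}$ via a Brouwer fixed-point argument on the compactified polygon (Proposition \ref{prop:charx0}). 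You instead bypass properness and the filling analysis entirely: you exhibit the regular point directly as a critical point of the $G$-invariant convex function, because the isotropy representation of $\mathbb{Z}/k$ on $T_{P^*}\mathcal{T}_{\mathbb{R}}\cong\mathbb{R}^k/\mathbb{R}^3$ contains no trivial summand, and then strict convexity along the connecting geodesics (which stay in the real slice) forces $F(Q)>F(P^*)$ for $Q\ne P^*$. This is a legitimate and arguably more economical route, at the price of proving only minimality over the polygon locus rather than over all of Teichm\"uller space (which is all the theorem requires), and of having to make rigorous the $G$-equivariant identification of the tangent space of the real slice with the quotient permutation representation --- the one place where your argument needs genuine care, though the computation you sketch (Möbius deformations $=$ trivial $\oplus$ first rotation summand) is right for all $k\ge 3$. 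Finally, your flagged ``main obstacle'' is not one: Wolpert's convexity theorem in the form the paper cites (\cite[Cor.\ 4.7]{Wol87}, Theorem \ref{thm:wolpert}) holds for arbitrary essential closed curves, with no simplicity hypothesis.
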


Let us roughly outline the strategy of proof: Firstly, we associate to
every polygon a hyperbolic surface by gluing two oppositely oriented
copies of the polygon pointwise along corresponding sides. Note that
the surface is noncompact and has $k$ cusps. We refer to this surface
as a {\em billiard surface}. Then every even-sided closed billiard
trajectory in the polygon lifts to a pair of closed geodesics of the
same length in the corresponding surface and every odd-sided closed
billiard trajectory lifts to one closed geodesic in the surface of
twice the length of the original billiard trajectory. In short,
billiard trajectories in the polygon correspond to geodesics in the
billiard surface.

This allows us to rephrase the conjecture as a result on the length of
closed geodesics in billiard surfaces. In order to apply powerful
results of Teichm\"uller theory and the {\em Weil-Petersson metric},
we consider the billiard surfaces as points in {\em Teichm\"uller
  space} and we call the subspace of all billiard surfaces the {\em
  billiard space}. Specifically, we use the results of Wolpert that
geodesic length functions are strictly convex with respect to the
Weil-Petersson Metric of Teichm\"uller space, and of Kerckhoff that
summed length functions of filling curves are proper.

Introducing a Weil-Petersson isometry of Teichm\"uller space (the so
called {\em flip map}) which fixes the billiard space pointwise, we
show that the billiard space is a geodesically convex subset of
Teichm\"uller space with respect to the Weil-Petersson metric. The
average length function of a family of cyclically related closed
billiard trajectories corresponds to a geodesic length function of a filling
family of closed curves in Teichm\"uller space. By the above mentioned
results, the geodesic length function becomes minimal in a unique
point in Teichm\"uller space and it only remains to show that this
minimising point in Teichm\"uller space corresponds to the billiard
surface associated to the regular polygon.

The following five sections of this paper follow essentially the
arguments of proof described above. In Appendix A, we briefly discuss
an analogous problem in the Euclidean setting: Here the polygons are
rectangles of area one and the unique minimizing billiard table turns
out to be the unit square.

\medskip

{\bf Acknowledgement:} The authors gratefully acknowledge the
inspiring and helpful discussions with Andreas Knauf and Joan Porti
concerning the strategy of proof. They also thank Andy Hayden for
numerous general detailled discussions concerning the topic and many
aspects of the proof. NP greatly enjoyed the hospitality of the TU
Dortmund and the Isaac Newton Institute, Cambridge, while he was
working on certain parts of this article.

\section{Cyclically related billiard trajectories are filling}

In this section, we prove a particular property of the connected
components of the complement of a union of all rotations of a closed
billiard trajectory in a regular hyperbolic polygon. Let us first
introduce this property for families of curves, which is called {\em
  filling}, in full detail. This definition for polygons is guided by
the desire that the lift of a family of filling curves in the
corresponding billiard surface (which will be introduced later) is
also filling. Note that, for finite volume Riemann surfaces, a family
of curves is called filling if each connected component of their
complement is an open topological disc or a disc with one puncture
corresponding to one of the cusps of the surface. Since the main
result of this section (Proposition \ref{prop-fill} below) holds both
for compact and ideal regular polygons, we formulate it in this
generality. Let us first introduce some basic notation.

\begin{dfn} \label{dfn:hyppoly} Let ${\bf P}$ be a (closed) hyperbolic
  $k$-gon. Let $x_1,\,\ldots,\,x_k$ (which lie in ${\mathbb D}^2$ or
  its boundary $\partial {\mathbb D}^2$) denote the vertices of
  ${\bf P}$ cyclically ordered anti-clockwise. Let $[x_i,x_{i+1}]$
  denote the (geodesic) side of ${\bf P}$ with endpoints $x_i$ and
  $x_{i+1}$, where indices are taken modulo $k$. We use the convention
  that each side contains both its endpoints.
\end{dfn}

Henceforth, we only consider hyperbolic polygons
${\bf P} \subset {\mathbb D}^2$ with interior angles equal to the
fixed value $\pi/l$ (compact case) or $0$ (ideal case). Such a polygon
gives rise to a tessellation of ${\mathbb D}^2$ via repeated
reflections and to a natural projection map from ${\mathbb D}^2$ to
${\bf P}$. Then the projection of every oriented bi-infinite geodesic
in ${\mathbb D}^2$ can be viewed as a billiard trajectory in
${\bf P}$, as long as the geodesic in ${\mathbb D}^2$ is not
completely contained in the union of the boundaries of the tiles in
this tessellation. Conversely, given a billiard trajectory in
${\bf P}$ with a start point, it can be {\em unfolded} to a
bi-infinite geodesic in ${\mathbb D}^2$ by reflecting the billiard
table along its sides instead of the trajectory, whenever it hits the
boundary. Note that this viewpoint allows us to define billiard
trajectories of ${\bf P}$ even in the case when they hit the vertices
$x_i$ of ${\bf P}$. We also like to mention for the sake of simplicity
that, if there is no danger of misinterpretation, we often do not
distinguish between billiard trajectories and geodesics given as
arc-length parametrized curves and their geometric representation as
subsets of polygons or surfaces.

\begin{dfn}
An \emph{arc} $\alpha$ of a closed billiard trajectory in ${\bf P}$ is 
a closed geodesic arc whose interior lies in ${\bf P}$ and whose endpoints
lie on $\partial{\bf P}$. Note that an endpoint of $\alpha$ may be
a vertex of ${\bf P}$, but such a vertex must lie in ${\mathbb D}^2$.
\end{dfn} 

Now we can introduce the concept of being {\em filling}. The different
types of connected components in Definition \ref{def-fill} are
illustrated in Figure \ref{fig:filling}.

\begin{dfn}\label{def-fill}
  Let ${\bf P}$ be a (closed) hyperbolic polygon and $\gamma$ be a
  union of closed billiard trajectories in ${\bf P}$. We say that
  $\gamma$ \emph{fills} ${\bf P}$ if $\gamma$ is connected and each
  connected component of ${\bf P}\setminus\gamma$ is a topological
  disc whose boundary is one of:
  \begin{itemize}
  \item[(a)] a topological circle in $\gamma$ made up of segments of
    geodesic arcs of $\gamma$;
  \item[(b)] a topological arc in $\gamma$ (made up of segments of
    geodesic arcs in $\gamma$) and an arc of one side of ${\bf P}$,
    possibly including one or both vertices in this side; or
  \item[(c)] a topological arc in $\gamma$, exactly one vertex of
    ${\bf P}$ and an arc in each of the two sides ending in this
    vertex, but not including either of the other vertices in these
    sides.
  \end{itemize}
\end{dfn}

\begin{figure}
  \begin{center}
    \psfrag{P}{${\bf P}$} 
%    \psfrag{(a)}{\magenta{(a)}}
%    \psfrag{(b)}{\magenta{(b)}} 
%    \psfrag{(c)}{\magenta{(c)}}
    \includegraphics[width=\textwidth]{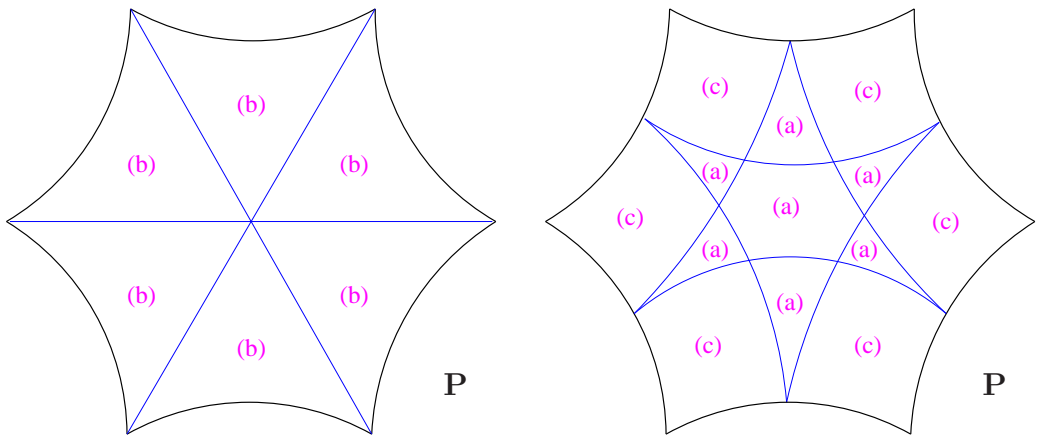}
    \caption{Examples of connected components of type (a), (b) and (c)
    in Definition \ref{def-fill} of ${\bf P} \setminus \gamma$.
    $\gamma$ is the union of the blue arcs. \label{fig:filling}}
  \end{center}
\end{figure}

Now we state the main result of this section.

\begin{prop}\label{prop-fill}
  Let ${\bf P}_0$ be a regular hyperbolic $k$-gon and let $\rho$
  denote the anti-clockwise rotation through angle $2\pi/k$ about the
  centre of ${\bf P}_0$.  Let $\gamma_0$ be a closed billiard
  trajectory in ${\bf P}_0$ and let $\gamma_i=\rho^i(\gamma_0)$ for
  $i=1,\,\ldots,\,k-1$.  Then $\gamma=\bigcup_{i=0}^{k-1}\gamma_i$
  fills ${\bf P}_0$.
\end{prop}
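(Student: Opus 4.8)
My plan is to argue that $\gamma$ is connected and then to classify the connected components of ${\bf P}_0 \setminus \gamma$ directly, using the rotational symmetry in an essential way. For connectedness: each $\gamma_i$ is itself a closed billiard trajectory, hence connected as a subset of ${\bf P}_0$. The rotation $\rho$ fixes the centre $c$ of ${\bf P}_0$, and I expect that every closed billiard trajectory $\gamma_0$ must have an arc that passes ``close enough'' to $c$ that its various $\rho$-images overlap; more robustly, I would argue that the union $\gamma = \bigcup_i \gamma_i$ is $\rho$-invariant, and if it were disconnected, $\rho$ would permute its components; taking a component $C$ and its orbit under $\langle\rho\rangle$, one sees that the arcs of $\gamma_0$ lying in $C$ cannot form a closed billiard trajectory unless $C = \gamma$, since the billiard trajectory $\gamma_0$ enters every ``sector'' determined by the symmetry only if it is connected to its images --- the cleanest way is to observe that $\gamma_0$, being a closed billiard trajectory, either passes through $c$ (in which case all $\gamma_i$ share the point $c$) or winds around in a way that forces consecutive rotational copies to cross. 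I would make this precise by noting that two adjacent copies $\gamma_i$ and $\gamma_{i+1}$ always intersect: a closed billiard trajectory in ${\bf P}_0$ separates some pair of sides from another, and rotating by $2\pi/k$ produces a curve that must cross the original by a topological (Jordan-curve / linking) argument on the disc-with-punctures underlying ${\bf P}_0$.

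For the classification of components, the key structural fact is that ${\bf P}_0 \setminus \gamma$ is invariant under the cyclic group $\langle \rho \rangle$, which acts on the set of components. I would first handle the centre $c$: it lies in the closure of $\gamma$ or in some component $C_0$; if $c \in C_0$, then $\rho(C_0) = C_0$ (since $\rho$ fixes $c$ and permutes components), so $C_0$ is the unique $\rho$-invariant component, it contains no vertex (vertices are not fixed by $\rho$) and no boundary arc (its rotational images would force it to meet every side symmetrically, but then its boundary is a single topological circle made of arcs of $\gamma$), giving type (a). Every other component $C$ has trivial stabiliser, so its orbit has size $k$, and I must show $C$ is a disc of type (a), (b) or (c). Since $C$ is an open subset of ${\bf P}_0$ with boundary consisting of pieces of $\gamma$ together with possibly pieces of $\partial {\bf P}_0$, the content is to rule out a component touching two distinct sides that do not share a vertex, and to rule out a component containing two vertices. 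This is where I would use that $\gamma_0$ is a \emph{single} closed billiard trajectory: between two consecutive hits on $\partial {\bf P}_0$, the arc of $\gamma_0$ is a geodesic crossing the polygon, and the collection of all arcs of all $\gamma_i$ is ``dense enough'' along the boundary --- more precisely, each side of ${\bf P}_0$ is hit by $\gamma_0$ (else $\gamma_0$ would not be a genuine closed billiard trajectory touching all sides; if a side is not hit, one works with the sub-polygon, but by the symmetry each side is hit the same number of times), and by rotational symmetry the hitting points are distributed on all sides, so any component's boundary arc on a side is bounded by endpoints of arcs of $\gamma$.

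The main obstacle I anticipate is precisely this combinatorial/topological analysis of the components near the boundary: showing that no component ``straddles'' a chunk of $\partial{\bf P}_0$ containing a whole side plus parts of two neighbours plus a vertex-free stretch, i.e., that the only ways a component meets $\partial{\bf P}_0$ are as in (b) (one side, possibly with its vertices) or (c) (exactly one vertex with a little bit of each adjacent side). I would control this by a careful local picture at each vertex $x_i$ of ${\bf P}_0$: near $x_i$ the two incident sides meet at angle $0$ (ideal case) or $\pi/l$ (compact case), and the arcs of $\gamma_0$ hitting those two sides near $x_i$, together with their $\rho^{-1}$- and $\rho$-images hitting the neighbouring vertices, cut off a neighbourhood of $x_i$ into a type-(c) piece; away from vertices, a standard argument shows each component's intersection with $\partial {\bf P}_0$ is a single arc, giving type (b), or empty, giving type (a). The disc property of each component follows because ${\bf P}_0$ is simply connected and each component is a complementary region of a connected graph ($\gamma$ together with $\partial {\bf P}_0$) embedded in the closed disc, so no complementary region can have nontrivial topology. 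I would assemble these local pictures using the explicit $k$-fold symmetry to reduce everything to the behaviour in a single fundamental sector of ${\bf P}_0$ under $\langle \rho \rangle$.
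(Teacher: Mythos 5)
There is a genuine gap, and it sits exactly at the point you wave at with ``a topological (Jordan-curve / linking) argument.'' Your connectedness step rests on the claim that $\gamma_0$ ``separates some pair of sides from another'' so that $\rho(\gamma_0)$ must cross it. But topology alone cannot deliver this: a priori a closed billiard trajectory could bounce back and forth between two \emph{adjacent} sides, staying in a small neighbourhood of a single vertex $x_i$; its rotation would then live near $x_{i+1}$ and the two copies would be disjoint. Ruling this configuration out is not topological but genuinely hyperbolic-geometric: the paper's Lemma \ref{lem-intersect} shows that every closed billiard trajectory must meet two \emph{non-adjacent} sides, and its proof uses the reflection law (angle of incidence equals angle of reflection) to produce a geodesic triangle with two internal angles at least $\pi/2$, contradicting the angle-sum bound for hyperbolic triangles. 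Nothing in your proposal supplies this input, and your alternative suggestions (that $\gamma_0$ passes near the centre, or that some component-orbit argument forces crossing) are not substantiated and are false or circular as stated. Moreover, even once one knows $\gamma_0$ meets non-adjacent sides, the crossing of $\gamma_0$ with $\rho(\gamma_0)$ still requires care: if no \emph{single} arc of $\gamma_0$ joins non-adjacent sides, the paper has to take two consecutive arcs forming an m-shaped curve across three successive sides and run a three-way case analysis on how the endpoints interlace. Your proposal does not anticipate this case.

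The second half of your plan (classification of components) is closer to the paper's argument and could be completed: the facts that a $\rho$-invariant $\gamma$ missing one vertex misses all vertices, and that every side meets $\gamma$ by symmetry, are exactly what the paper uses to exclude a boundary arc containing a whole side or more than one vertex. But note two further points you gloss over: (i) the statement ``each side of ${\bf P}_0$ is hit by $\gamma_0$'' is false in general --- it is the union $\gamma$ that hits every side, again by Lemma \ref{lem-intersect} plus symmetry; and (ii) your ``standard argument'' that a component meets $\partial{\bf P}_0$ in at most one arc is precisely where the already-established connectedness of $\gamma$ must be invoked, via a Jordan-arc separation argument inside the disc. As written, the proposal does not constitute a proof; the missing geometric lemma is the heart of the matter.
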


Note that the curves $\gamma_i$ are the closed billiard trajectories
cyclically related to $\gamma_0$. An important lemma for the proof is
the following.

\begin{lm}\label{lem-intersect}
If $\gamma_0$ is a closed billiard trajectory in a hyperbolic
polygon ${\bf P}$ then there are two non-adjacent sides of ${\bf P}$
that intersect $\gamma_0$ (not necessarily as endpoints of a single 
geodesic arc of the trajectory).
\end{lm}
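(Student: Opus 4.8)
The plan is to argue by contradiction, supposing that the sides of $\mathbf P$ met by $\gamma_0$ are pairwise adjacent, and to show that $\gamma_0$ then cannot close up. First I would observe that a closed billiard trajectory must hit at least two distinct sides: if it hit only one side $s$, then each arc of the trajectory would begin and end on $s$, and unfolding the trajectory by reflecting in $s$ would produce a geodesic line in $\mathbb D^2$ crossing the geodesic containing $s$ twice (or being tangent to it), which is impossible for distinct geodesics in the hyperbolic plane; hence the set $S$ of sides met by $\gamma_0$ has $|S|\ge 2$. The remaining case to exclude is $|S|=2$ with the two sides adjacent, say meeting at a vertex $v$ (note $v\in\mathbb D^2$, since the interior angle is $\pi/l$ or $0$ — but if the angle is $0$ the two sides are asymptotic and the argument below still applies, with $v$ on $\partial\mathbb D^2$).

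Next I would localise the picture near $v$: consider the region $W$ of $\mathbf P$ bounded by the two sides $s_1,s_2$ through $v$ and by the geodesic $\ell$ joining the other endpoints of $s_1$ and $s_2$; since $\gamma_0$ meets only $s_1$ and $s_2$, and its arcs have interior in $\mathbf P$ with endpoints on $s_1\cup s_2$, the whole trajectory $\gamma_0$ is contained in $W$. Now I unfold: reflecting repeatedly in $s_1$ and $s_2$ generates a group $G$ acting on $\mathbb D^2$, and the unfolded trajectory is a complete geodesic $\widetilde\gamma$ in $\mathbb D^2$ meeting the $G$-orbit of $\{s_1,s_2\}$, which is a family of geodesics all passing through points of the $G$-orbit of $v$. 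When the angle at $v$ is $\pi/l$ these reflections generate a dihedral group fixing $v$, so all these geodesics pass through the single point $v$; a complete geodesic $\widetilde\gamma$ (not through $v$) can cross a pencil of geodesics through $v$ at most twice, whence the trajectory has at most two arcs and cannot close up nontrivially — and a two-arc billiard trajectory bouncing between $s_1$ and $s_2$ would again force two distinct geodesics of $\mathbb D^2$ to meet twice. When the angle at $v$ is $0$, $s_1$ and $s_2$ are asymptotic at the ideal point $v$, the reflections in them generate a group with limit set $\{v\}$ on the relevant side, and the same pencil-through-$v$ obstruction applies. In all cases we reach a contradiction with $\gamma_0$ being a genuine closed billiard trajectory.

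Finally I would package this: since $|S|\ge 2$ and no two elements of $S$ can be adjacent (the case analysis above), $S$ contains two non-adjacent sides, which is exactly the assertion — and the parenthetical remark in the statement is automatically satisfied because we never needed the two non-adjacent sides to be endpoints of a common arc.

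I expect the main obstacle to be making the ``a geodesic crosses a pencil of geodesics through a point at most twice'' step fully rigorous and uniform across the compact ($\pi/l$) and ideal ($0$) cases, in particular handling the ideal case where $v\in\partial\mathbb D^2$ and the reflection group is parabolic-like rather than elliptic; a clean way to do this is to pass to the upper half-plane model sending $v$ to $\infty$, so that $s_1,s_2$ become vertical half-lines (ideal case) or, after the elliptic case, rays from a finite point, reducing the obstruction to an elementary statement about a complete geodesic meeting a family of parallel vertical lines or concurrent lines at most twice.
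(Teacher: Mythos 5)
Your strategy is genuinely different from the paper's. The paper argues directly inside ${\bf P}$: assuming every arc of $\gamma_0$ runs between two fixed adjacent sides $[x_{i-1},x_i]$ and $[x_i,x_{i+1}]$, it examines the two extreme intersection points of $\gamma_0$ with these sides, uses the reflection law to show that at each of them the trajectory makes an angle at most $\pi/2$ with the direction towards $x_i$, and then gets a contradiction because the triangle with apex $x_i$ spanned by these two points would have two internal angles at least $\pi/2$, violating the hyperbolic angle-sum inequality. Your unfolding argument is a legitimate alternative, but as written it contains one false step: a complete geodesic $\widetilde\gamma$ not through $v$ does \emph{not} cross a pencil of geodesics through $v$ at most twice. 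It crosses each member at most once, and the orbit of $\{s_1,s_2\}$ under the reflection group at a vertex of angle $\pi/l$ consists of $l$ lines through $v$ (in the ideal case, an infinite discrete family of geodesics asymptotic to $v$), so your intermediate conclusion that the trajectory ``has at most two arcs'' is also wrong. The correct and sufficient statement is that $\widetilde\gamma$ crosses only \emph{finitely many} of the wall geodesics --- trivially in the compact case, and in the ideal case because with $v=\infty$ in the upper half-plane the walls are vertical lines $x=na$ while a non-vertical complete geodesic is a semicircle of bounded horizontal extent --- whereas the unfolding of a closed, hence periodic and bi-infinite, billiard trajectory must cross infinitely many pairwise distinct walls, one per bounce and each wall at most once. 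With ``at most twice'' replaced by this finiteness argument, your contradiction goes through, and the two approaches are then comparable in length; the paper's is more elementary (only the angle-sum inequality), yours is more conceptual and treats the compact and ideal cases uniformly.

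Two smaller points. Your final packaging is off: the case analysis does not show that ``no two elements of $S$ can be adjacent'', only that $S$ is not exactly a pair of adjacent sides; the correct deduction is that for $k\ge 4$ a pairwise-adjacent family of sides has at most two elements, so if $S$ contained no two non-adjacent sides it would be such a pair, which you have excluded. (For $k=3$ all sides are pairwise adjacent and the literal statement is vacuous; the paper's own proof makes the same tacit reduction, so this is not a defect particular to your argument.) Also, in the two-arc degenerate case the obstruction is not that two geodesics would meet twice but that two geodesics meeting at, or asymptotic to, $v$ admit no common perpendicular; in any case this is subsumed by the finiteness-of-crossings argument.
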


\begin{proof}
We suppose the result is false. That is, suppose that $\gamma_0$
is a closed billiard trajectory in ${\bf P}$ and there are two sides 
$[x_{i-1},x_i]$ and $[x_i,x_{i+1}]$ of ${\bf P}$ so that 
that every arc of $\gamma_0$ has one endpoint in
$[x_{i-1},x_i]$ and the other endpoint in $[x_i,x_{i+1}]$.
Note that $\gamma_0$ cannot pass through $x_i$ since a geodesic arc
from $x_i$ to a point in either of these two sides must be contained 
in this side. Moreover, if $\gamma_0$ passes through $x_{i-1}$ (or $x_{i+1}$) 
then we could find an arc in $\gamma_0$ connecting the non-adjacent sides 
$[x_{i-2},x_{i-1}]$ and $[x_i,x_{i+1}]$ (or the non-adjacent sides
$[x_{i-1},x_i]$ and $[x_{i+1},x_{i+2}]$ respectively).
Let $<$ denote the natural anticlockwise order on 
$[x_{i-1},x_i]\cup[x_i,x_{i+1}]$. 

There are finitely many intersection points of $\gamma_0$ with 
$\partial{\bf P}$. Write them as $y_j$ where $-n\le j\le m$ and $j\neq 0$
where
$$
x_{i-1}<y_{-n}<y_{-n+1}<\cdots<y_{-1}< x_i<
y_1<\cdots<y_{m-1}<y_m<x_{i+1}.
$$
Every geodesic arc in $\gamma_0$ connects a point $y_{-r}$ with negative
index and a point $y_s$ with positive index.

Consider $y_{-n}$. Suppose $a$ is the largest index so that there is
an arc of $\gamma_0$ from $y_{-n}$ to $y_a$ (see Figure
\ref{fig-triangT} for illustration). Then there is a point $y_b$ with
$b\le a$ so that the arc $[y_{-n},y_b]$ is adjacent to $[y_{-n},y_a]$
in the billiard trajectory $\gamma_0$. (Note that there could be other
arcs of $\gamma_0$ with endpoint $y_{-n}$.)  Since the angle of
incidence equals the angle of reflection, the angle $\theta_{-n}$
between the arcs $[y_b,y_{-n}]$ and $[y_{-n},x_i]$ equals the angle
between the arcs $[y_a,y_{-n}]$ and $[y_{-n},x_{i-1}]$.  Since
$b\le a$ then $\theta_{-n}\le \pi/2$.

Similarly for $y_m$. Let $-c$ be the smallest index so that there is
an arc from $y_m$ to $y_{-c}$. Then the angle $\theta_m$ between the
arcs $[y_{-c},y_m]$ and $[y_m,x_{i+1}]$ is at most $\pi/2$.

\begin{figure}
  \begin{center}
    \psfrag{xi-1}{$x_{i-1}$} 
    \psfrag{xi}{$x_i$}
    \psfrag{xi+1}{$x_{i+1}$} 
    \psfrag{p-n}{\blue{$\theta_{-n}$}}
    \psfrag{y-n}{$y_{-n}$} 
    \psfrag{yb}{$y_b$} 
    \psfrag{ya}{$y_a$}
    \psfrag{ym}{$y_m$} 
    \psfrag{T}{\red{$T$}}
    \includegraphics[height=6cm]{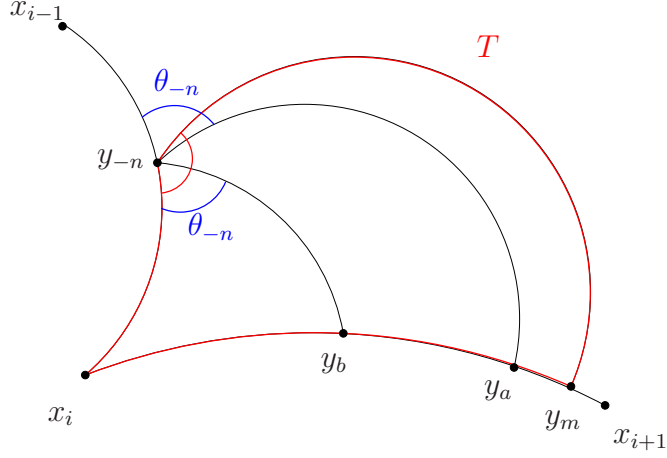}
  \caption{Note that $\theta_{-n} \le \pi/2$ and, therefore, the red
  internal angle of the triangle $T$ at $y_{-n}$ must be at least $\pi/2$.
  \label{fig-triangT}}
  \end{center}
\end{figure}

Now consider the solid closed geodesic triangle $T$ with vertices
$x_i$, $y_m$ and $y_{-n}$.  The entire billiard trajectory must be
contained in $T$. The internal angle of $T$ at $y_{-n}$ is at least
$\pi-\theta_{-n}$ and the internal angle at $y_m$ is at least
$\pi-\theta_m$. But both these angles are at least $\pi/2$, which
contradicts the fact that the sum of internal angles of a hyperbolic
triangle are less than $\pi$.
\end{proof}

\begin{dfn}
Let $\alpha$ and $\beta$ be two closed geodesic arcs in a hyperbolic 
polygon ${\bf P}$ with distinct endpoints. We say that the endpoints 
of $\alpha$ and $\beta$ in $\partial{\bf P}$ \emph{interlace} if each 
interval of $\partial{\bf P}$ between the endpoints of $\alpha$ contains
an endpoint of $\beta$ and vice versa.
\end{dfn}

We leave the easy proof of the following fact to the reader.

\begin{lm}\label{lem-interlace}
Let $\alpha$ and $\beta$ be two closed geodesic arcs in a hyperbolic 
polygon ${\bf P}$ with distinct endpoints. If the endpoints 
of $\alpha$ and $\beta$ interlace then $\alpha$ and $\beta$ intersect
in an interior point of ${\bf P}$. 
\end{lm}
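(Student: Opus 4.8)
The plan is to prove this by a purely topological argument; the fact that $\alpha$ and $\beta$ are geodesic arcs plays no role beyond the observations that each is an embedded arc, that its interior is contained in $\mathrm{int}({\bf P})$, and that its two endpoints are distinct points of $\partial{\bf P}$. Write the endpoints of $\alpha$ as $p,q$ and those of $\beta$ as $r,s$; by hypothesis $p,q,r,s$ are four distinct points of $\partial{\bf P}$, and ${\bf P}$ is homeomorphic to a closed disc with $\partial{\bf P}$ corresponding to its bounding circle.

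First I would record the separation statement: since $\alpha$ is an embedded arc with $\mathrm{int}(\alpha)\subset\mathrm{int}({\bf P})$ and endpoints $p,q\in\partial{\bf P}$, the Jordan--Schoenflies theorem applied in the disc ${\bf P}$ shows that ${\bf P}\setminus\alpha$ has exactly two connected components $U_1$ and $U_2$, and that the two arcs into which $p$ and $q$ divide $\partial{\bf P}$ lie, apart from the points $p,q$ themselves, one in $U_1$ and the other in $U_2$. The interlacing hypothesis says precisely that $r$ and $s$ lie on different ones of these two boundary arcs of $\partial{\bf P}$; since moreover $r,s\notin\{p,q\}$, after relabelling we obtain $r\in U_1$ and $s\in U_2$.

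Finally I would argue as follows. If $\alpha\cap\beta=\emptyset$ then $\beta$ is a connected subset of the disjoint union $U_1\sqcup U_2={\bf P}\setminus\alpha$, hence is contained in one of $U_1,U_2$, contradicting $r\in U_1$ and $s\in U_2$; so there is a point $z\in\alpha\cap\beta$. To see that $z\in\mathrm{int}({\bf P})$: if $z\in\partial{\bf P}$ then, since $\mathrm{int}(\beta)\subset\mathrm{int}({\bf P})$, the point $z$ must be an endpoint of $\beta$, i.e. $z\in\{r,s\}$; but $r,s$ are points of $\partial{\bf P}$ distinct from $p,q$, whereas the only points of $\alpha$ lying on $\partial{\bf P}$ are $p$ and $q$, so $r,s\notin\alpha$ --- a contradiction. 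Hence $z\in\mathrm{int}({\bf P})$, and in particular $z$ lies in the interior of each of $\alpha$ and $\beta$. The only point needing any care --- and presumably the reason the authors deem the proof easy and leave it to the reader --- is the separation statement for the proper arc $\alpha$ in the disc ${\bf P}$; granting that standard fact, nothing here presents a real obstacle.
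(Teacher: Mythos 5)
The paper gives no proof of this lemma (it is explicitly left to the reader), so there is nothing to compare against; your separation argument is the standard one and is correct. The only quibble is that in the ideal case ${\bf P}$, as a subset of ${\mathbb D}^2$, is not compact and hence not literally homeomorphic to a closed disc; one should first compactify by adjoining the ideal vertices (or run the Jordan--Schoenflies argument in $\overline{{\mathbb D}^2}$), which is harmless here since the endpoints of $\alpha$ and $\beta$ are never ideal vertices. You are also right to flag that the geodesic hypothesis is only used to guarantee that $\alpha$ and $\beta$ are embedded arcs with interiors in the open polygon; the latter is what rules out the degenerate case of an arc lying inside a side of ${\bf P}$, for which the conclusion as stated would fail.
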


Finally, we give a detailled proof of our main result of this section.

\begin{proof}[Proof of Proposition \ref{prop-fill}]
We begin by proving $\gamma$ is arcwise connected. We divide the proof into 
two cases.

\begin{figure}[h]
  \begin{center}
    \psfrag{xi-2}{$x_{i-2}$}
    \psfrag{xi-1}{$x_{i-1}$} 
    \psfrag{xi}{$x_i$}
    \psfrag{xi+1}{$x_{i+1}$}
    \psfrag{xi+2}{$x_{i+2}$}
    \psfrag{xj-1}{$x_{j-1}$} 
    \psfrag{xj}{$x_j$}
    \psfrag{xj+1}{$x_{j+1}$}
    \psfrag{(a)}{{\Large{(a)}}}
    \psfrag{(b)}{{\Large{(b)}}}
    \psfrag{(c)}{{\Large{(c)}}}
    \psfrag{a0}{\red{$\alpha_0$}}
    \psfrag{a1=r(a0)}{\red{$\alpha_1=\rho(\alpha_0)$}}
    \psfrag{a0-}{\red{$\alpha_0^-$}}
    \psfrag{a0+}{\red{$\alpha_0^+$}}
    \psfrag{b0-}{\red{$\beta_0^-$}}
    \psfrag{b0+}{\red{$\beta_0^+$}}
    \psfrag{a1-}{\blue{$\alpha_1^-$}}
    \psfrag{a1+}{\blue{$\alpha_1^+$}}
    \psfrag{b1-}{\blue{$\beta_1^-$}}
    \psfrag{b1+}{\blue{$\beta_1^+$}}
    \includegraphics[height=10cm]{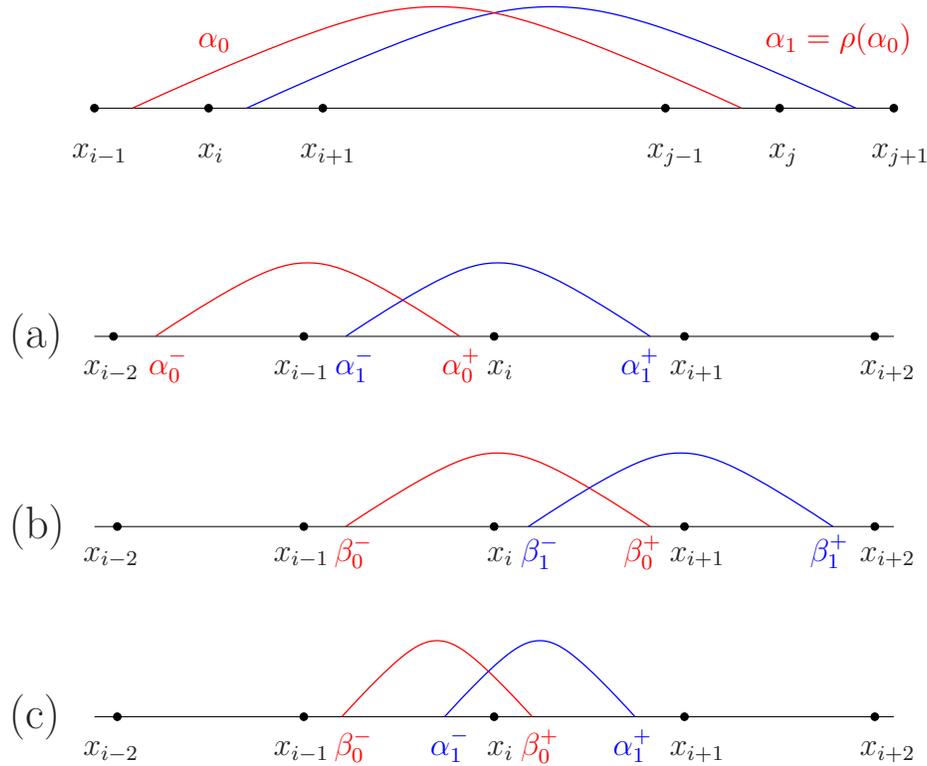}
    \caption{Configurations of interlacing in the proof of Proposition
      \ref{prop-fill}. The boundary $\partial {\bf P}$ is straigthened
    to simplify the illustration.
  \label{fig-interlace}}
  \end{center}
\end{figure}

First, suppose that there is an arc $\alpha_0$ of $\gamma_0$ so that
there are two non-adjacent sides of ${\bf P}_0$ containing the
endpoints of $\alpha_0$. Let $[x_{i-1},x_i]$ and $[x_{j-1},x_j]$
denote these two sides. Since these edges are not adjacent $x_{i-1}$,
$x_i$, $x_{j-1}$ and $x_j$ are all distinct. Now consider
$\alpha_1=\rho(\alpha_0)$. It intersects the boundary of ${\bf P}_0$
in the sides $[x_i,x_{i+1}]$ and $[x_j,x_{j+1}]$.  The intervals
$[x_{i-1},x_i]$, $[x_i,x_{i+1}]$, $[x_{j-1},x_j]$ and $[x_j,x_{j+1}]$
are distinct by construction and occur in this cyclic order.
Therefore, as we move around the boundary of ${\bf P}_0$ the endpoints
of $\alpha_0$ and $\alpha_1$ interlace (see top of Figure
\ref{fig-interlace}). This means that $\alpha_0$ and $\alpha_1$
intersect in ${\bf P}_0$ by Lemma \ref{lem-interlace}.  Hence
$\gamma_0$ and $\gamma_1$ intersect. Applying powers of $\rho$ we see
that $\gamma_i$ and $\gamma_{i+1}$ intersect, thus proving that
$\gamma$ is arcwise connected.

Secondly, suppose that every arc of $\gamma_0$ connects adjacent sides
of ${\bf P}_0$. Every such arc has to connect interior points of the
adjacent sides of ${\bf P}_0$ for, otherwise, we would be in the first
case. Using Lemma \ref{lem-intersect}, we can find consecutive arcs
$\alpha_0$ and $\beta_0$ of $\gamma_0$ meeting $\partial{\bf P}_0$ in
three successive sides. To be precise, suppose one end $\alpha_0^-$ of
$\alpha_0$ is a point in $[x_{i-2},x_{i-1}]$, the common endpoint
$\alpha_0^+=\beta_0^-$ of $\alpha_0$ and $\beta_0$ lies in
$[x_{i-1},x_i]$ and the other endpoint $\beta_0^+$ of $\beta_0$ lies
in $[x_i,x_{i+1}]$. Note that because $\alpha_0$ and $\beta_0$ are
geodesic arcs, their only intersection point is their common
endpoint. They therefore form an m-shaped curve.  Now consider
$\alpha_1\cup\beta_1=\rho(\alpha_0\cup\beta_0)$, with endpoints
$\alpha_1^-\in[x_{i-1},x_i]$, $\alpha_1^+=\beta_1^-\in[x_i,x_{i+1}]$
and $\beta_1^+\in[x_{i+1},x_{i+2}]$. If the sets
$\{\alpha_0^-,\,\alpha_0^+,\,\beta_0^+\}$ and
$\{\alpha_1^-,\,\alpha_1^+,\,\beta_1^+\}$ have a point in common, then
$\gamma_0$ and $\gamma_1$ intersect, and so $\gamma$ is connected as
above.  Thus, we may assume these two sets are disjoint. It suffices
to show that certain endpoints of these arcs interlace and so, using
Lemma \ref{lem-interlace}, the corresponding arcs intersect. If $<$
denotes the natural anti-clockwise order on
$[x_{i-1},x_i]\cup[x_i,x_{i+1}]$, then it is easy to show:
\begin{enumerate}
\item[(a)] if $\alpha_1^-<\alpha_0^+$ then
$\alpha_0^-,\,\alpha_1^-,\,\alpha_0^+,\,\alpha_1^+$ interlace, and so
$\alpha_0$ and $\alpha_1$ intersect;
\item[(b)] if $\beta_1^-<\beta_0^+$ then
$\beta_0^-,\,\beta_1^-,\,\beta_0^+,\,\beta_1^+$ interlace, and so
$\beta_0$ and $\beta_1$ intersect;
\item[(c)] if $\beta_0^-<\alpha_1^-$ and $\beta_0^+<\alpha_1^+$
then $\beta_0^-,\,\alpha_1^-,\,\beta_0^+,\,\alpha_1^+$ interlace,
and so $\beta_0$ and $\alpha_1$ intersect.
\end{enumerate}
The cases (a)-(c) are illustrated in Figure \ref{fig-interlace}. We
observe that, since $\alpha_0^+=\beta_0^-$ and $\alpha_1^+=\beta_1^-$
then condition (c) is precisely the condition that both (a) and (b)
fail.  Therefore these three cases exhaust all possibilities.  The
argument then follows as in the first case.

This shows that $\gamma$ is arcwise connected. Since ${\bf P}_0$ is
topologically a disc every connected component $U$ of
${\bf P}_0\setminus\gamma$ is a topological disc. If every point of
$\partial U$ lies in $\gamma$, then we have case (a) of Definition
\ref{def-fill}. So suppose that $\partial U$ contains a point of
$\partial{\bf P}_0$ that is not contained in $\gamma$.  Then
$\partial U$ contains a non-empty topological arc of
$\partial{\bf P}_0$ both of whose endpoints lie in $\gamma$ (note this
arc is not necessarily contained in just one side of ${\bf P}_0$). We
claim that $\partial U$ can contain at most one such topological arc
in $\partial{\bf P}_0$.  Suppose this is false. Then we can find four
points $b_1$, $b_2$, $c_1$, $c_2$ in $\partial U$ so that (a) the
points $c_1$, $c_2$ lie in $\gamma$, (b) the points $b_1$, $b_2$ lie
in the interior of arcs of $\partial{\bf P}_0$ not intersecting
$\gamma$ and (c) these four points $b_1$, $c_1$, $b_2$, $c_2$ are
interlaced.  Therefore we can find a Jordan arc $\delta$ from $b_1$ to
$b_2$ (that is from $\partial{\bf P}_0$ to itself) in $U$ (except for
its end points) so that the two connected components of
${\bf P}_0\setminus \delta$ each contains a point of $\gamma$, namely
$c_1$ and $c_2$. This contradicts the connectedness of $\gamma$. Hence
$U$ can contain at most one topological arc of $\partial{\bf P}_0$ in
its boundary. Recall, we are assuming such an arc exists, or else we
are in case (a) of Definition \ref{def-fill}.  Call this arc
$\varepsilon$. If the interior of $\varepsilon$ is contained in only
one side of ${\bf P}_0$, then we are in case (b) of Definition
\ref{def-fill}. If the interior of $\varepsilon$ contains points in
precisely two different sides of ${\bf P}_0$, then these two sides
must be adjacent, say $[x_{i-1},x_i]$ and $[x_i,x_{i+1}]$, and their
common vertex $x_i$ must also be contained in the interior of
$\varepsilon$.  In particular, $\gamma$ does not pass through the
vertex $x_i$.  Since $\gamma$ is preserved by the symmetry map $\rho$,
we see that $\gamma$ does not pass through any vertex of ${\bf
  P}_0$. Since $\gamma$ intersects the sides $[x_{i-1},x_i]$ and
$[x_i,x_{i+1}]$ and does not contain their endpoints, it must contain
points of both their interiors.  In particular, $\varepsilon$ does not
contain $x_{i-1}$ or $x_{i+1}$.  Hence we are in case (c) of
Definition \ref{def-fill}.  Finally, suppose that the interior of
$\varepsilon$ contains points from at least three sides of
${\bf P}_0$. As $\varepsilon$ is connected, this means it contains a
whole side of ${\bf P}_0$, which contradicts the fact that, by
symmetry, each side of ${\bf P}_0$ intersects $\gamma$.  Thus, the
only possibilities for $\partial U$ are (a), (b) and (c) from
Definition \ref{def-fill} as required.
\end{proof}

\section{Teichm\"uller space and Fenchel-Nielsen coordinates}

From now on we fix $k \ge 3$ and we only consider \emph{ideal}
$k$-gons. In contrast to the convention in the previous section, our
ideal $k$-gons ${\bf P}$ do not contain the vertices at infinity, but
they contain the sides and are therefore closed subsets of
${\mathbb D}^2$. A key observation in the proof of Theorem
\ref{thm:main} is that every ideal $k$-gon ${\bf P}$ gives rise to a
Riemann surface $S_{\bf P}$ (its billiard surface) via a gluing
process of two copies of ${\bf P}$, denoted by ${\bf P}^+$ and
${\bf P}^-$, along corresponding sides, and that every closed billiard
trajectory in ${\bf P}$ gives rise to one or two closed geodesics in
$S_{\bf P}$. This allows us to apply powerful results from
Teichm\"uller theory.

Let us first set up the Teichm\"uller space framework and introduce
the relevant objects. A {\rm Riemann surface (of finite type)} $S$ is
a $2$-dimensional oriented differentiable manifold with finitely many
ends, carrying a Riemannian metric of constant curvature minus one. We
suppose that $S$ has finite area with respect to this metric. In
particular, the ends are realised as cusps. The universal covering of
$S$ agrees with ${\mathbb D}^2$ and the canonical complex structure of
${\mathbb D}^2$ induces a complex structure on $S$. Thus it makes
sense to consider holomorphic and anti-holomorphic isometries of $S$.

Let ${\bf P}\subset {\mathbb D}^2$ be an ideal $k$-gon (with
anti-clockwise enumerated vertices $x_1,\dots,x_k$ as in Definition
\ref {dfn:hyppoly}) and $S_{\bf P}$ be the corresponding billiard
surface. Note that $S_{\bf P}$ is, topologically, homeomorphic to a
$k$-punctured sphere. The ends of $S_{\bf P}$ correspond to the
vertices $x_j$. Note that $S_{\bf P}$ is a {\em labelled billiard
  surface} since its ends carry labels in
$\{1,2,\dots,k\}$. Similarly, ${\bf P}$ is a {\em labelled polygon}
where the bi-infinite geodesic side $(x_i,x_{i+1})$ of ${\bf P}$
without its end points is endowed with the label $i$ (mod $k$).
$S_{\bf P}$ has a natural anti-holomorphic isometry
$J_{\bf P}: S_{\bf P} \to S_{\bf P}$ interchanging ${\bf P}^+$ and
${\bf P}^-$, and with fixed point set $\cup_{i=1}^k (x_i,x_{i+1})$.
Let ${\bf P}_0 \subset {\mathbb D}^2$ be an ideal regular $k$-gon with
anti-clockwise labelling, $R_0 = S_{{\bf P}_0}$ be the corresponding
labelled billiard surface, and $J_0 = J_{{\bf P}_0}: R_0 \to R_0$ be the
corresponding anti-holomorphic isometry.

\begin{dfn} The \emph{Teichm\"uller space} $T(R_0)$ is the set of all
  equivalence classes of pairs $(S,f)$ where $S$ is an oriented
  Riemann surface and $f: R_0 \to S$ is a quasiconformal mapping. Two
  such pairs $(S,f)$ and $(S',f')$ are equivalent if the map
  $f' \circ f^{-1}: S \to S'$ is homotopic to an orientation
  preserving isometry. We denote the equivalence class associated to
  the pair $(S,f)$ by $[S,f]$. A point $[S,f]$ in Teichm\"uller space
  $T(R_0)$ is also called a {\em marked Riemann surface}.
\end{dfn}

The Teichm\"uller space $T(R_0)$ carries a natural complex manifold
structure and the anti-holomorphic isometry $J_0: R_0 \to R_0$ gives
rise to an anti-holomorphic automorphism ${\mathcal F}$ on
Teichm\"uller space (see \cite[pp. 229]{IT}), which we call the flip
map.

\begin{dfn} \label{def:indandflip} Let $\varphi: R_0 \to R_0$ be an
  orientation preserving quasiconformal mapping. Then we define the
  induced map $\varphi_*: T(R_0) \to T(R_0)$ as
  $$ \varphi_*([S,f:R_0 \to S]) = [S, f \circ \varphi: R_0 \to S]. $$
  The {\em flip map} ${\mathcal F}: T(R_0) \to T(R_0)$ is defined as
  \begin{equation} \label{eq:flip} 
  {\mathcal F}([S,f: R_0 \to S]) = [S^*, j_S \circ f \circ J_0:
    R_0 \to S^*]. 
  \end{equation}
  Here, $S^*$ is the same surface as $S$ but with the opposite
  orientation and $j_S: S \to S^*$ is, as a map, the pointwise identity.
\end{dfn}

Let $\rho: {\bf P}_0 \to {\bf P}_0$ be the anti-clockwise rotation
through angle $2\pi/k$ about the centre of ${\bf P}_0$. By abuse of
notation, we denote the associated rotation in the corresponding
billiard surface, again, by $\rho: R_0 \to R_0$. The induced map
$\rho_*: T(R_0) \to T(R_0)$ has order $k$. Note that the special point
$x_0 = [R_0, {\rm id}: R_0 \to R_0] \in T(R_0)$ is a common fixed
point of both $\rho_*$ and the flip map ${\mathcal F}$.

Our next aim is to introduce suitable Fenchel-Nielsen coordinates
$(l,\tau)$, which yield a diffeomorphism between $T(R_0)$ and
$({\mathbb R}^+)^{k-3} \times {\mathbb R}^{k-3}$. We first decompose
${\bf P}_0$ into right angled compact hexagons, right angled pentagons
with one ideal vertex and right angled quadrilaterals with two ideal
vertices. Such a decomposition induces a decomposition of $R_0$ into
$k-2$ pairs of pants $Y_1,\dots,Y_{k-2}$ with three/two/one geodesic
boundary cycles, respectively. Each of these pairs of pants $Y_j$ is
invariant (as a set) under the reflection $J_0$, and they have their
own reflections $J_{Y_j}$ which agree with the restrictions of $J_0$
to $Y_j$. For illustration, we now use the following colour
convention: The $k-3$ boundary cycles $C_1,\dots,C_{k-3} \subset R_0$
of the pants decomposition $R_0 = \bigcup_{j=1}^{k-2} Y_j$
are {\em green lines}. The bi-infinite geodesics
$(x_i,x_{i+1}) \subset R_0$ are {\em red lines}. Cutting $R_0$ along
all red lines splits the surface into the two polygons ${{\bf P}_0}^+$
and ${{\bf P}_0}^-$.

\begin{figure}[h]
  \begin{center}
    \psfrag{c4}{$\red{f(c_4)}$} 
    \psfrag{c2}{$\red{f(c_2)}$}
    \psfrag{Y1}{$Y_1'$} 
    \psfrag{Y2}{$Y_2'$}
    \psfrag{t1}{$\green{\tau_1}$} 
    \psfrag{A}{$\red{A}$} 
    \psfrag{B}{$\red{B}$}
    \psfrag{C1}{$\green{C_1'}$}
    \includegraphics[height=7cm]{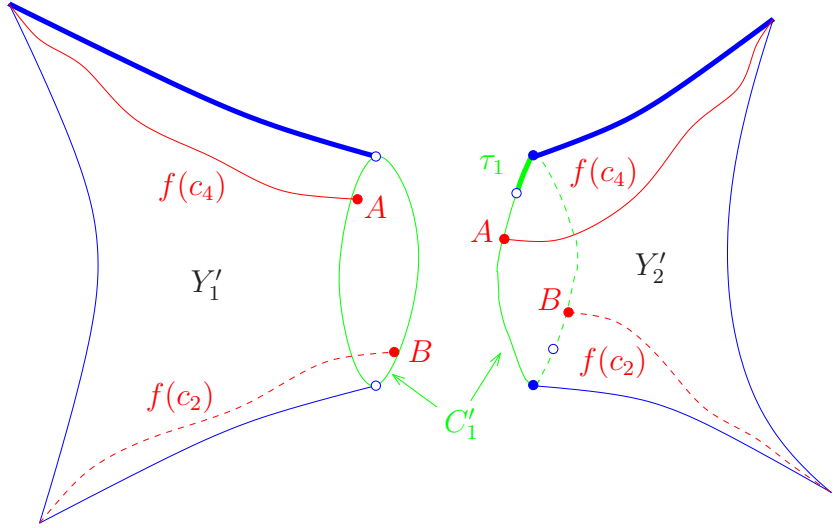}
    \caption{The green boundary cycles of $Y_1'$ and $Y_2'$ are
      identified such that the $A$'s and $B$'s fit together. The red
      curve $f(c_4)$ is freely homotopic (fixing the end points) to
      the union of the two thick blue arcs and the thick green arc of
      length $\tau_1$ in between.\label{fig-fenchniel}}
  \end{center}
\end{figure}

The Fenchel-Nielsen coordinates of a point $[S,f] \in T(R_0)$ are now
given as follows: Let $C_j' \subset S$ be the unique closed geodesic
corresponding to $f(C_j)$ modulo free homotopy in $S$.  Again, we
think of the curves $C_j'$ as {\em green lines}. They give rise to a
pants decomposition $S = \bigcup_{j=1}^{k-2} Y_j'$ agreeing,
combinatorially, with the pants decomposition of $R_0$. The
length parameters of $[S,f]$ are then given by the lengths
$l_j \in {\mathbb R}^+$ of the boundary cycles $C_j' \subset S$.
 
For every geodesic $c_i = (x_i,x_{i+1}) \subset R_0$ of $R_0$, let its
image $f(c_i) \subset S$ again carry the {\em colour red}. Note that
the bi-infinite curves $f(c_i) \subset S$ are generally no longer
geodesics. Each pair of pants $Y_j'$ in the decomposition of $S$ comes
equipped with a triplet of {\em blue geodesic arcs}, namely the fixed
point set of the intrinsic reflection $J_{Y_j'}$ of this pair of
pants. Now, for every bi-infinite red curve $f(c_i) \subset S$ there
exists a unique regular freely homotopic curve connecting the same
ends, which is made up of alternating blue and green arcs (regular
means here that we do not allow to go back and forth in certain parts
of the curve). This means that the curve $f(c_i)$ defines an arc in
each green boundary cycle $C_j'$ along its path, and the length of
this arc provides a unique twist parameter $\tau_j \in {\mathbb
  R}$. Note that the sign of the twist parameter is uniquely
determined by the orientations of the pairs of pants and their
boundary cycles. Note also, that every boundary cycle $C_j'$ defines
an $X$-piece (two pairs of pants glued along $C_j'$) and there are two
curves $f(c_{i_1})$ and $f(c_{i_2})$ intersecting it. The twist
parameter $\tau_j \in {\mathbb R}$ is independent of the choice of
$f(c_{i_1})$ or $f(c_{i_2})$ (see Figure \ref{fig-fenchniel} for
illustration of the twist parameter $\tau_1$ in $S = Y_1' \cup
Y_2'$). For further details we refer to, e.g., \cite[Section
7.6]{Hub}.

\begin{dfn}
  We denote by $B(R_0)$ the subset of $T(R_0)$ with vanishing twist
  parameters. The points $[S,f] \in B(R_0)$ are called {\em marked
    billiard surfaces}. We refer to $B(R_0)$ as the {\em billiard
    space} associated to ${\bf P}_0$.
\end{dfn}

\section{Properties of the billiard space}
\label{sec:propbillspace}

Now we explain that each point of $B(R_0)$ can be realised by a
labelled billiard surface $S$ together with an (almost canonical)
quasiconformal mapping $f: R_0 \to S$, respecting the labelling (i.e.,
mapping the $i$-th end of $R_0$ to the $i$-th end of $S$, for
$i=1,\dots,k$). Given the length coordinates $(l_1,\dots,l_{k-3})$, we
can construct an ideal hyperbolic $k$-gon ${\bf P}$ with these
parameters in its decomposition into hexagons, pentagons and
quadrilaterals consistent with the decomposition of ${\bf P}_0$.
Next, we choose quasiconformal maps from each building block
(hexagon/pentagon/quadrilateral) of ${\bf P}_0$ to the corresponding
building block of ${\bf P}$ mapping corresponding boundary components
onto each other such that they can be combined to a global
quasiconformal map $f_{\bf P}: R_0 \to S_{\bf P}$, equivariant under
the global reflections $J_0$ and $J_{S_{\bf P}}$:
\begin{equation} \label{eq:equiv} 
f_{\bf P} \circ J_0 = J_{S_{\bf P}} \circ f_{\bf P}. 
\end{equation}
By construction, the union $\bigcup_{i=1}^k c_i$ of the bi-infinite
{\em red lines} of $R_0$ are mapped under $f_{\bf P}$ onto the union
of the {\em blue geodesic arcs} of the pairs of pants $Y_j'$ of
$S_{\bf P}$, and the {\em green} boundary cycles of the pants
decomposition of $R_0$ are mapped under $f_{\bf P}$ onto the
corresponding {\em green} boundary cycles of the pants
decomposition of $S_{\bf P}$. This fact guarantees that all twist
parameters of $[S_{\bf P},f_{\bf P}]$ are zero. In this context, we
can think of $B(R_0)$ as the ``subset of labellel billiard surfaces''
in $T(R_0)$.

\begin{remark} \label{rem:identifyx0}
  As seen above, a general point $x \in B(R_0)$ is an equivalence
  class $x = [S_{\bf P}, f_{\bf P}: R_0 \to S_{\bf P}]$ with an almost
  canonical quasiconformal mapping $f_{\bf P}$. Note that
  $x = [S_{\bf P}, f_{\bf P}]$ agrees with
  $x_0 = [R_0, {\rm id}: R_0 \to R_0] \in B(R_0)$ if and only if the
  polygon ${\bf P}$ is regular.
\end{remark}

The Teichm\"uller space $T(R_0)$ carries a complex manifold structure with
a natural symplectic form $\omega_{WP}$, the {\em Weil-Petersson
  symplectic form}. By Wolpert's Theorem (see \cite{Wol1}),
$\omega_{WP}$ can be written in terms of the Fenchel-Nielsen
coordinates $(l,\tau)$ of $T(R_0)$ as follows:
$$ \omega_{WP} = - \sum_{j=1}^{k-3} d\tau_j \wedge dl_j. $$
The symplectic form $\omega_{WP}$ and the almost complex structure on
$T(R_0)$ induce a K\"ahler metric $g_{WP}$ on $T(R_0)$, the {\em
  Weil-Petersson metric}. While the Riemannian metric $g_{WP}$ is
generally not complete (see \cite{Wol75}), it is still true that any
pair of points $x_1, x_2 \in T(R_0)$ can be joined by a unique
Weil-Petersson geodesic (see \cite{Wol87}). The billiard space
$B(R_0)$ has the following useful properties.

\begin{prop} The billiard space $B(R_0)$ is a Lagrangian submanifold
  of the symplectic manifold $(T(R_0),\omega_{WP})$. Moreover,
  $B(R_0)$ is a geodesically convex subset of $(T(R_0),g_{WP})$, i.e.,
  for given $x_1, x_2 \in B(R_0)$ the unique Weil-Petersson geodesic
  connecting $x_1$ and $x_2$ lies entirely in $B(R_0)$.
\end{prop}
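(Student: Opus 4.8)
The plan is to prove the two assertions in turn, deriving geodesic convexity from the Lagrangian property together with the flip map $\mathcal{F}$.

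\medskip

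\noindent\textbf{Step 1: $B(R_0)$ is Lagrangian.} By Wolpert's formula $\omega_{WP} = -\sum_{j=1}^{k-3} d\tau_j \wedge dl_j$ in the Fenchel--Nielsen coordinates $(l,\tau)$, and by definition $B(R_0) = \{\tau_1 = \cdots = \tau_{k-3} = 0\}$ is the zero section of the twist coordinates. Hence $B(R_0)$ is a $(k-3)$-dimensional submanifold of the $2(k-3)$-dimensional symplectic manifold $(T(R_0), \omega_{WP})$, and the restriction of $\omega_{WP}$ to $B(R_0)$ vanishes because every term $d\tau_j \wedge dl_j$ pulls back to zero on $\{\tau = 0\}$. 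So $B(R_0)$ is Lagrangian. (This is the routine part.)

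\medskip

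\noindent\textbf{Step 2: the flip map fixes $B(R_0)$ pointwise.} The key structural input is that the flip map $\mathcal{F}$ of Definition \ref{def:indandflip} is an anti-holomorphic Weil--Petersson isometry of $T(R_0)$ whose fixed point set contains $B(R_0)$. I would argue as follows: for a point $x = [S_{\bf P}, f_{\bf P}] \in B(R_0)$ realised by a labelled billiard surface as in the discussion preceding Remark \ref{rem:identifyx0}, the equivariance \eqref{eq:equiv}, $f_{\bf P} \circ J_0 = J_{S_{\bf P}} \circ f_{\bf P}$, together with the fact that $J_{S_{\bf P}}$ is an orientation-reversing isometry of $S_{\bf P}$, shows that $j_{S_{\bf P}} \circ f_{\bf P} \circ J_0$ and $f_{\bf P}$ differ (after passing to $S_{\bf P}^*$) by an isometry homotopic to the identity, so $\mathcal{F}(x) = x$. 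Intrinsically, one checks that $\mathcal{F}$ sends the point with Fenchel--Nielsen coordinates $(l,\tau)$ to the point with coordinates $(l,-\tau)$: reversing orientation and precomposing with $J_0$ negates each twist parameter while preserving the geodesic length parameters, since $J_0$ preserves the pants curves $C_j$ and reverses orientation along them. Consequently $\mathrm{Fix}(\mathcal{F}) = \{\tau = 0\} = B(R_0)$.

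\medskip

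\noindent\textbf{Step 3: geodesic convexity.} Given $x_1, x_2 \in B(R_0)$, let $\sigma$ be the unique Weil--Petersson geodesic joining them (uniqueness by Wolpert \cite{Wol87}). Since $\mathcal{F}$ is a Weil--Petersson isometry, $\mathcal{F} \circ \sigma$ is also a geodesic, and it joins $\mathcal{F}(x_1) = x_1$ to $\mathcal{F}(x_2) = x_2$; by uniqueness $\mathcal{F} \circ \sigma = \sigma$ as sets (indeed with the same parametrization after matching endpoints). Therefore every point of $\sigma$ lies in $\mathrm{Fix}(\mathcal{F}) = B(R_0)$, which is exactly geodesic convexity.

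\medskip

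\noindent The main obstacle is Step 2, specifically verifying cleanly that $\mathcal{F}$ is genuinely a Weil--Petersson \emph{isometry} (not merely a real-analytic automorphism) and that its fixed locus is precisely $B(R_0)$ rather than some larger set. The isometry property follows because the Weil--Petersson metric is natural under the mapping class group action and under the orientation-reversing involution induced by $J_0$ (cf.\ \cite[pp.~229]{IT}); the identification of the fixed locus reduces to the coordinate computation $\mathcal{F}: (l,\tau) \mapsto (l,-\tau)$, whose sign bookkeeping (orientation conventions for twist parameters, and the effect of replacing $S$ by $S^*$) is the one place care is required.
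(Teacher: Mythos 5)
Your proposal is correct and follows essentially the same route as the paper: identify $B(R_0)=\{\tau=0\}$ as the fixed point set of the Weil--Petersson isometry $\mathcal{F}$, with $\mathcal{F}(l,\tau)=(l,-\tau)$ in Fenchel--Nielsen coordinates, deduce the Lagrangian property from Wolpert's formula for $\omega_{WP}$, and obtain geodesic convexity from the uniqueness of Weil--Petersson geodesics. The paper is terser (it cites Imayoshi--Taniguchi for the isometry property and the coordinate formula, pinning down the sign via the fixed point $x_0$), but the content is the same.
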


\begin{proof} The flip map ${\mathcal F}$, defined in \eqref{eq:flip},
  is an isometry with respect to $g_{WP}$ (see
  \cite[p. 230]{IT}). Written in our Fenchel-Nielsen coordinates we
  have 
  \begin{equation} \label{eq:flipFenchNiel} 
  {\mathcal F}(l,\tau) = (l,-\tau).
  \end{equation}
  This follows from \cite[p. 230, bottom formula]{IT} and the fact
  that $x_0 = [R_0, {\rm id}]$ is a fixed point of ${\mathcal
    F}$. Therefore, the fixed point set of ${\mathcal F}$ is the space
  $B(R_0)$ of marked billiard surfaces. By the above considerations,
  $B(R_0)$ is a Lagrangian submanifold and, as the fixed point set of
  an isometry, $B(R_0)$ is geodesically convex.
\end{proof}

Finally, we give an important characterisation of the point $x_0 \in B(R_0)$.

\begin{prop} \label{prop:charx0}
The only simultaneous fixed point of $\rho_*$ and ${\mathcal F}$
in $T(R_0)$ is $x_0 = [R_0,{\rm id}] \in B(R_0)$.
\end{prop}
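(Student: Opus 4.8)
The plan is to combine two facts that have already been established: the flip map $\mathcal{F}$ acts on Fenchel--Nielsen coordinates by $(l,\tau) \mapsto (l,-\tau)$, so its fixed point set is exactly the billiard space $B(R_0)$; and $B(R_0)$ is geodesically convex in $(T(R_0),g_{WP})$. Thus any simultaneous fixed point $x$ of $\rho_*$ and $\mathcal{F}$ lies in $B(R_0)$, and it suffices to show that the only fixed point of $\rho_*$ \emph{inside} $B(R_0)$ is $x_0$. Here I would use that $\rho_*$ has order $k$ and, crucially, that $\rho_*$ is a Weil--Petersson isometry (it is induced by the isometry $\rho: R_0 \to R_0$, so it acts on $T(R_0)$ by precomposition of markings, and such maps preserve $g_{WP}$). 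Since $x_0$ is itself a fixed point of $\rho_*$, and the Weil--Petersson metric has the property that any two points are joined by a \emph{unique} geodesic, a standard averaging/midpoint argument shows the fixed point set of $\rho_*$ is geodesically convex; hence it is enough to rule out a second fixed point.

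The key step is to show that the fixed point set of $\rho_*$ restricted to $B(R_0)$ is zero-dimensional, i.e.\ reduces to the single point $x_0$. I would argue geometrically: a point $x \in B(R_0)$ is a marked billiard surface $[S_{\bf P}, f_{\bf P}]$ with ${\bf P}$ an ideal $k$-gon, and $\rho_*$ fixes $x$ precisely when the polygon ${\bf P}$ admits a conformal automorphism realizing the cyclic relabelling $i \mapsto i+1$ of its sides, compatibly with the marking. Because the marking pins down which end goes to which end, such an automorphism must cyclically permute the $k$ ideal vertices $x_1,\dots,x_k$ of ${\bf P}$ in order; an order-$k$ cyclic symmetry of an ideal $k$-gon that rotates the vertices forces the polygon to be regular. (In Fenchel--Nielsen terms: $\rho_*$ fixes $x$ iff the length vector $(l_1,\dots,l_{k-3})$ is invariant under the induced permutation of pants curves coming from $\rho$; one then checks this permutation acts on the simplex of length parameters with the regular polygon's length vector as its unique fixed point, which may require the explicit symmetry of the hexagon/pentagon/quadrilateral decomposition.)

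A cleaner alternative, which I expect to be the actual argument, avoids coordinates altogether. Since $x_0$ is fixed by $\rho_*$ and $\mathcal{F}$, and $\rho_*$ has finite order $k$, suppose $x \neq x_0$ is another simultaneous fixed point. The unique Weil--Petersson geodesic $\sigma$ from $x_0$ to $x$ is then preserved (as a set, with endpoints fixed) by the isometry $\rho_*$; since $\rho_*$ fixes both endpoints and acts by an isometry on a geodesic segment, it fixes $\sigma$ pointwise. But then every point of $\sigma$ is fixed by $\rho_*$, so the fixed point set of $\rho_*$ contains a nontrivial arc through $x_0$; by the same reasoning with $\mathcal{F}$ this arc lies in $B(R_0)$. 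It therefore suffices to show $x_0$ is an \emph{isolated} fixed point of $\rho_*$ in $B(R_0)$, equivalently that the derivative $d(\rho_*)_{x_0}$ on $T_{x_0}B(R_0)$ has no nonzero fixed vector. This is a linear-algebra computation on the tangent space, which under the identification via Fenchel--Nielsen length coordinates is an explicit finite-order linear map; the regular polygon's extra symmetry should make this map a nontrivial permutation-type action with no fixed vectors.

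The main obstacle I anticipate is the last step: ruling out a nonzero $\rho_*$-fixed tangent vector at $x_0$ cleanly, without an awkward case analysis over $k$. One needs to understand precisely how $\rho$ permutes the pants curves $C_1,\dots,C_{k-3}$ (this permutation is \emph{not} a simple cyclic shift, because the chosen pants decomposition of ${\bf P}_0$ is not itself $\rho$-symmetric), and then verify the induced linear action on ${\mathbb R}^{k-3}$ has trivial fixed subspace — or, better, replace the fixed decomposition by a $\rho$-equivariant family of pants decompositions indexed cyclically, so that the fixed point condition becomes a transparent symmetry statement forcing all the $l_j$ equal, which pins down the regular polygon.
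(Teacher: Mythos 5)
Your first step is exactly the paper's: $\mathcal{F}(x)=x$ together with the coordinate expression $\mathcal{F}(l,\tau)=(l,-\tau)$ puts $x$ in $B(R_0)$, so $x=[S_{\bf P},f_{\bf P}]$ and, by Remark \ref{rem:identifyx0}, it remains to show that ${\bf P}$ is regular. The gap is in the second half. Your main route reduces everything to the assertion that ``an order-$k$ cyclic symmetry of an ideal $k$-gon that rotates the vertices forces the polygon to be regular,'' but that assertion \emph{is} the substance of the proposition, and you give no argument for it. The paper supplies one: by the definition of Teichm\"uller equivalence, $\rho_*(x)=x$ means $f_{\bf P}\circ\rho\circ f_{\bf P}^{-1}$ is homotopic to an orientation-preserving isometry $g_1$ of $S_{\bf P}$; since homotopies preserve ends, $g_1$ sends end $j$ to end $j+1$, hence permutes the geodesics $c_j$ joining consecutive ends, hence preserves or swaps ${\bf P}^{\pm}$, and orientation-preservation rules out the swap; then $g_1$ extends continuously to the compactification $\overline{{\bf P}^+}$, Brouwer's fixed point theorem produces a fixed point $z_0$, which must be interior because $g_1$ moves every point of $\partial\overline{{\bf P}^+}$; finally the $k$ triangles with vertices $z_0,x_j,x_{j+1}$ are pairwise isometric, so each angle at $z_0$ equals $2\pi/k$ and ${\bf P}$ is regular. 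The production of the rotation centre $z_0$ (via Brouwer, or equivalently by recognising the extension of $g_1$ as an elliptic isometry of ${\mathbb D}^2$) is the missing idea; without it your key claim is a restatement of the goal rather than a proof of it.

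Your alternative routes do not close the gap either. The Fenchel--Nielsen computation founders on exactly the difficulty you identify: the chosen pants decomposition is not $\rho$-invariant, so $\rho_*$ does not act on the length coordinates by a permutation, and the ``unique fixed length vector'' claim is unsupported. The reduction via ``isolated fixed point plus geodesic convexity of $\mathrm{Fix}(\rho_*)$'' is logically sound (uniqueness of Weil--Petersson geodesics does make the fixed set of an isometry geodesically convex), but you explicitly leave the isolation step --- that $d(\rho_*)_{x_0}$ has no nonzero fixed vector on $T_{x_0}B(R_0)$ --- unproved, and it is not obviously easier than the original statement.
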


\begin{proof} Let $x \in T(R_0)$ be a simultaneous fixed point of 
${\mathcal F}$ and $\rho_*$. 

The fixed point property ${\mathcal F}(x)=x$ and
\eqref{eq:flipFenchNiel} imply that $x \in B(R_0)$. Therefore, $x$ has
a representation $x = [S_{\bf P},f_{\bf P}: R_0 \to S_{\bf P}]$ with
${\bf P} \subset {\mathbb D}^2$ a labelled ideal hyperbolic $k$-gon
and $f_{\bf P} \circ J_0 = J_{S_{\bf P}} \circ f_{\bf P}$. By Remark
\ref{rem:identifyx0}, we only have to show that ${\bf P}$ is regular.

The fixed point property $\rho_*(x) = x$ means that
$g_0 := f_{\bf P} \circ \rho \circ f_{\bf P}^{-1}: S_{\bf P} \to
S_{\bf P}$ is homotopic to an isometry $g_1: S_{\bf P} \to S_{\bf
  P}$. Since a homotopy between two maps on $S_{\bf P}$ preserves the
ends, $g_1$ maps the end $j$ of $S_{\bf P}$ to the end $j+1$
(modulo $k$). Let $c_j$ be the unique geodesic in $S_{\bf P}$
connecting the ends $j$ and $j+1$ modulo $k$. Then the set
$C = \bigcup_{j=1}^k c_j$ splits $S_{\bf P}$ into the ideal polygons
${\bf P}^+$ and ${\bf P}^-$, both isometric to ${\bf P}$, and we have
$g_1(c_j) = c_{j+1}$ for all $j$, modulo $k$.  This means that
$g_1(C) = C$ and the isometry $g_1$ either interchanges ${\bf P}^+$
and ${\bf P}^-$ or preserves them as sets. Since $g_1$ is orientation
preserving, we cannot have $g_1({\bf P}^+) = {\bf P}^-$. This
shows that we have $g_1: {\bf P}^+ \to {\bf P}^+$.

Now we embed ${\bf P}^+$ into ${\mathbb D}^2$ and compactify
${\bf P}^+$ by adding the ideal vertices
$x_1,\dots,x_k \in \partial {\mathbb D}^2$ corresponding to the ends
$1,\dots,k$, respectively. Then the isometry $g_1$ extends to a
continuous map, denoted again by $g_1$, of the compacification
$\overline{{\bf P}^+}$. By Brouwer's Fixed Point Theorem, there exists
$z_0 \in \overline{{\bf P}^+}$ such that $g_1(z_0)=z_0$. This point
must be an interior point of $\overline{{\bf P}^+}$ since the boundary
of $\overline{{\bf P}^+}$, consisting of the points $x_1,\dots,x_k$
and the geodesics $c_j$, cannot have a fixed point (recall that $g_1$
maps $x_j$ to $x_{j+1}$ modulo $k$).  Let $r_j$ be the geodesic ray
connecting $z_0$ with the ideal point $x_j$. Then $g_1$ maps the
triangle with vertices $z_0,x_j,x_{j+1}$ to the triangle with vertices
$z_0,x_{j+1},x_{j+2}$ modulo $k$. Therefore, all the triangles with
vertices $z_0,x_j,x_{j+1}$ for $j=1,\dots,k$ are isometric to one
another. Since isometries preserve angles, the angle between the rays
$r_j$ and $r_{j+1}$ at $z_0$ must therefore be $2\pi/k$. This shows
that ${\bf P}^+ \subset {\mathbb D}^2$ is a regular $k$-gon, finishing
the proof.
\end{proof}

\section{Geodesic lengths functions and cyclically related billiard
  trajectories}

Recall that ${\bf P}_0 \subset {\mathbb D}^2$ denotes a labelled
regular ideal $k$-gon and that $R_0$ is its associated billiard
surface with rotational symmetry $\rho: R_0 \to R_0$. Let us now
introduce geodesic length functions on Teichm\"uller space.

\begin{dfn} \label{dfn:geodlengthf} A closed curve in $R_0$ is called
  {\em essential} if it is neither null-homotopic nor spirals around
  one of the ends of $R_0$. Let
  $\widetilde \gamma = \{ \widetilde \gamma_1, \dots, \widetilde
  \gamma_N \}$ be a finite family of essential closed curves
  $\widetilde \gamma_i: S^1 \to R_0$. For $x=[S,f] \in T(R_0)$, let
  $\widetilde \gamma_i'$ be the unique closed geodesic which is freely
  homotopic to $f(\widetilde \gamma_i)$. Then the {\em geodesic length
    function} associated to $\widetilde \gamma$ is a map
  $$ L = L_{\widetilde \gamma}: T(R_0) \to [0,\infty), $$
  defined by
  $$ L(x) = \sum_{i=1}^N {\rm length}(\widetilde \gamma_i'). $$
\end{dfn}

Note that we can continuously deform a curve spiralling around one of
the ends of a hyperbolic surface into an arbitrarily short curve by
moving it up into the end. This is the reason that such curves are not
considered to be essential. The following fundamental convexity result
of Wolpert will be key for the proof of Theorem \ref{thm:main}.

\begin{thm}[{\cite[Cor. 4.7]{Wol87}}] \label{thm:wolpert} Let
  $\widetilde \gamma \subset R_0$ be a finite family of essential
  closed curves and $L = L_{\widetilde \gamma}: T(R_0) \to (0,\infty)$
  be the associated geodesic length function. Then the function $L$ is
  continuous and \emph{strictly convex} along every Weil-Petersson
  geodesic.
\end{thm}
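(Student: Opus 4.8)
The plan is to reduce the claim to a pointwise positivity property of the Weil-Petersson Hessian of a single geodesic length function, and then to establish that positivity by a maximum principle. Let $\sigma(t)$ be a non-constant Weil-Petersson geodesic in $T(R_0)$ --- such a geodesic stays inside $T(R_0)$ by the convexity already quoted from \cite{Wol87} --- and set $h(t)=L(\sigma(t))$. Since the Weil-Petersson connection is the Levi-Civita connection of $g_{WP}$ and $\sigma$ is a geodesic, $\nabla_{\dot\sigma}\dot\sigma=0$, so $h''(t)=(\operatorname{Hess}_{WP}L)(\dot\sigma(t),\dot\sigma(t))$. As $L$ is a finite sum of single-curve length functions $\ell_\gamma$ and the Hessian is additive, it suffices to show: for every $x=[S,f]\in T(R_0)$ and every non-zero $v\in T_xT(R_0)$ one has $(\operatorname{Hess}_{WP}\ell_\gamma)(v,v)>0$. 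Continuity of $L$ I would take for granted, since each $\ell_\gamma$ is real-analytic on $T(R_0)$ --- it is recovered from the trace of the holonomy of $\gamma$, which depends real-analytically on $[S,f]$ --- and hence so is the finite sum.

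Next I would set up the analytic model for the Hessian. Identify $T_xT(R_0)$ with the space of harmonic Beltrami differentials $\mu=\bar\phi/\rho$ on $S$, where $\phi$ is an integrable holomorphic quadratic differential and $\rho$ the hyperbolic area density, with $\|\mu\|_{WP}^2=\int_S|\mu|^2\,dA$; realize $f(\gamma)$ by its geodesic representative, again denoted $\gamma$. Starting from Gardiner's first-variation formula --- which expresses $d\ell_\gamma[\mu]$ as a multiple of the real pairing of $\mu$ against the relative Poincar\'e-series quadratic differential $\Theta_\gamma$ of $\gamma$ --- I would differentiate once more, using an explicit description of the Weil-Petersson covariant derivative of $\Theta_\gamma$ in the direction $\mu$. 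This is the step that feeds in the hyperbolic geometry, and the identity to aim for has the schematic shape
$$(\operatorname{Hess}_{WP}\ell_\gamma)(\mu,\mu)\;=\;c\!\int_\gamma u\,ds\;+\;(\text{further nonnegative terms}),\qquad c>0,$$
where $ds$ is hyperbolic arclength on $\gamma$ and $u$ is the unique bounded solution on $S$ of a screened Poisson equation $(\Delta-2)u=-|\mu|^2$, with $\Delta$ the (non-positive) hyperbolic Laplacian. Existence, uniqueness and boundedness of $u$ rest on $2$ lying in the resolvent set of $-\Delta$ on $L^2(S)$, which holds because $S$ has finite area and only cusp ends, so $-\Delta\ge 0$.

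From there the positivity is soft. As $2-\Delta$ is positive and invertible with positivity-preserving inverse, and $|\mu|^2\ge 0$, we get $u=(2-\Delta)^{-1}|\mu|^2\ge 0$. If $v\ne 0$ then $\mu\ne 0$, and a harmonic Beltrami differential $\bar\phi/\rho$ vanishes only at the isolated zeros of $\phi$, so $|\mu|^2\not\equiv 0$; the strong maximum principle then forces $u>0$ everywhere on $S$, whence $\int_\gamma u\,ds>0$ and, with the remaining nonnegative terms, $(\operatorname{Hess}_{WP}\ell_\gamma)(v,v)>0$. Summing over the finitely many curves of $\widetilde\gamma$ gives $h''(t)>0$ for all $t$, that is, strict convexity of $L$ along $\sigma$.

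The genuine obstacle is the middle step: the derivation of the second-variation identity. It needs an effective handle on the Weil-Petersson Levi-Civita connection --- equivalently, on the covariant derivative of the relative Poincar\'e series $\Theta_\gamma$ --- together with a somewhat delicate computation (unfolding and re-summing the Poincar\'e series, one integration by parts on $S$) that recasts the Hessian as the integral along $\gamma$ of the restriction of the solution of a screened Poisson equation on $S$. This is precisely the technical core of \cite{Wol87}. By comparison, the reduction $h''=(\operatorname{Hess}_{WP}L)(\dot\sigma,\dot\sigma)$, the spectral input $-\Delta\ge 0$ on a finite-area cusped surface, and the maximum-principle conclusion are all routine.
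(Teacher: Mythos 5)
The paper does not prove this statement at all: it is imported verbatim from Wolpert, cited as \cite[Cor.~4.7]{Wol87}, and used as a black box in the proof of Corollary \ref{cor:uniqmin}. So there is no in-paper argument to compare yours against; what you have written is an outline of Wolpert's own proof, and as such it is strategically accurate --- the reduction of convexity along a geodesic to positivity of the Weil--Petersson Hessian of a single $\ell_\gamma$, the identification of tangent vectors with harmonic Beltrami differentials $\bar\phi/\rho$, Gardiner's first-variation formula via the relative Poincar\'e series $\Theta_\gamma$, the appearance of the resolvent $(2-\Delta)^{-1}$ (well defined since $-\Delta\ge 0$ on the finite-area cusped surface), and the positivity-of-the-kernel/maximum-principle endgame are all the right ingredients, and the fact that $|\mu|^2$ vanishes only on the isolated zero set of $\phi$ is exactly what upgrades nonnegativity to strict positivity.

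That said, as a proof the proposal has a genuine gap, and you have located it yourself: the second-variation identity is stated only ``schematically,'' with an unspecified constant $c$ and unidentified ``further nonnegative terms,'' and every subsequent step leans on it. This identity is not a routine computation one can wave at --- it is the entire content of Wolpert's Sections 3--4 (unfolding the Poincar\'e series along $\gamma$, computing the covariant derivative of $\Theta_\gamma$ with respect to the Weil--Petersson connection, and the integration by parts that produces the screened Poisson equation), and without the precise form of the formula one cannot verify that the residual terms really are nonnegative rather than merely bounded. The pieces you do carry out (additivity of the Hessian, $h''=\operatorname{Hess}_{WP}L(\dot\sigma,\dot\sigma)$ along a geodesic, real-analyticity of $\ell_\gamma$ via traces of holonomy, positivity of $(2-\Delta)^{-1}$) are the soft parts. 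Since the present paper deliberately treats the theorem as a citation, the honest course is either to do likewise or to reproduce Wolpert's Hessian formula in full; the current sketch does neither.
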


Let us now link this concept with cyclically related closed billiard
trajectories in different ideal hyperbolic $k$-gons. This requires
further notation.

Let ${\bf P} \subset {\mathbb D}^2$ be a labelled ideal $k$-gon. It
was shown in \cite[Thm 2.1]{CPS} that a finite sequence
${\bf a} = (a_0,a_1,\dots,a_{n-1})$ is a billiard sequence (i.e., a
coding of a closed billiard trajectory) if and only if (a) consecutive values
$a_j$ and $a_{j+1}$ with indices taken modulo $n$ do not coincide and
(b) if ${\bf a}$ contains only two different labels, then they must
not be neighbours (i.e., must not differ by $\pm 1$ modulo $k$). Let
$\gamma_{{\bf a},{\bf P}}$ be the family consisting of the unique
closed billiard trajectory associated to $\bf a$ and all its cyclically
related billiard trajectories in $\bf P$. Then
$\gamma_{{\bf a},{\bf P}}$ consists of $k$ piecewise geodesic closed
curves $\gamma_i$. Let $L_{av}({\bf P}, {\bf a})$ be the average
length of these curves, i.e.,
$$ L_{av}({\bf P}, {\bf a}) = 
\frac{1}{k} \sum_{\gamma_i \in \gamma_{{\bf a},{\bf P}}} {\rm length}(\gamma_i).
$$
Let $\pi_{\bf P}: S_{\bf P} \to {\bf P}$ be the canonical projection
and
$\widetilde \gamma_{{\bf a},{\bf P}} = \pi_{\bf P}^{-1}(\gamma_{{\bf
    a},{\bf P}})$ be the lift of these billiard trajectories in the
corresponding billiard surface. Note that
$\widetilde \gamma_{{\bf a},{\bf P}}$ consists of $2k$ or $k$ closed
geodesics in $S_{\bf P}$, depending on whether $n$ is even or odd: Let
$\widetilde \gamma_i$ be one of the closed geodesics in
$\widetilde \gamma_{{\bf a},{\bf P}}$.  Then there exists a fixed
integer $t$, such that every label $s=a_j$ corresponds to a
transversal crossing between $\widetilde \gamma_i$ and a bi-infinite
geodesic $(x_{s+t},x_{s+t+1})$ (where indices are taken modulo $k$),
i.e., $\widetilde \gamma_i$ changes from ${\bf P}^{\pm}$ to
${\bf P}^{\mp}$. After $n$ such changes $\widetilde \gamma_i$ will not
close up if $n$ is odd. This is the reason why, in this case,
$\widetilde \gamma_{{\bf a},{\bf P}}$ consists of $k$ geodesics
corresponding to cutting sequences\footnote{As in the case of a
  labelled polygon $\bf P$, we can associate a symbolic coding to a
  closed curve in a labelled billiard surface $S_{\bf P}$ reflecting
  its crossings with the bi-infinite geodesics connecting subsequent
  ends. We refer to it as the \emph{cutting sequence} associated to
  the curve.} cyclically related to the doubling
${\bf a}{\bf a}=(a_0,a_1,\dots,a_{n-1},a_0,\dots,a_{n-1})$. But it is
obvious that we have in both cases
$$ \sum_{\widetilde \gamma_i \in \widetilde \gamma_{{\bf a},{\bf P}}} 
{\rm length}(\widetilde \gamma_i) = 2 k L_{av}({\bf P}, {\bf a}). $$
Moreover, the left hand side can be rewritten as the geodesic length
function associated to
$\widetilde \gamma_{\bf a} = \widetilde \gamma_{{\bf a},{\bf P}_0}$,
i.e.,
\begin{equation} \label{eq:avlength-teichm}
L_{\widetilde \gamma_{\bf a}}([S_{\bf P},f_{\bf P}]) = 2 k L_{av}({\bf P}, {\bf a}),
\end{equation}
with $f_{\bf P}: R_0 \to S_{\bf P}$ introduced at the beginning of
Section \ref{sec:propbillspace}. Note here that each closed curve in
$f_{\bf P}(\widetilde \gamma_{\bf a})$ is freely homotopic to a
corresponding curve in the family $\widetilde \gamma_{{\bf a},{\bf P}}$
since both closed curves in $S_{\bf P}$ have the same cutting
sequences.

We finish this section with the following useful observation.

\begin{lm} \label{lm:inv-length}
  Let ${\bf a} = (a_0,\dots,a_{n-1})$ be a billiard sequence. Then we
  have, for all $x \in T(R_0),$
  $$ L_{\widetilde \gamma_{\bf a}}(x) = L_{\widetilde \gamma_{\bf a}}(\rho_*(x)) = 
  L_{\widetilde \gamma_{\bf a}}({\mathcal F}(x)). $$
\end{lm}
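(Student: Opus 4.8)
The plan is to prove the two equalities separately, in each case by exhibiting a self-homeomorphism of $R_0$ that permutes the family $\widetilde\gamma_{\bf a}$ up to free homotopy and then invoking the basic fact that a geodesic length function depends only on the free homotopy classes involved, together with the way homeomorphisms act on Teichm\"uller space via precomposition of the marking.

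First I would treat the $\rho_*$-invariance. Recall $\rho:R_0\to R_0$ is the order-$k$ rotation of the regular billiard surface, and by Definition \ref{def:indandflip} we have $\rho_*([S,f]) = [S, f\circ\rho]$. Hence $L_{\widetilde\gamma_{\bf a}}(\rho_*(x))$ is, by Definition \ref{dfn:geodlengthf}, the summed length of the geodesic representatives of the curves $f\circ\rho(\widetilde\gamma_i)$, which is the summed length of the geodesic representatives of the curves $f(\rho(\widetilde\gamma_i))$. So it suffices to show that $\rho$ permutes the family $\widetilde\gamma_{\bf a} = \widetilde\gamma_{{\bf a},{\bf P}_0}$ up to free homotopy (indeed, it permutes it exactly, since these are honest geodesics in $R_0$ and $\rho$ is an isometry of $R_0$). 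This is essentially immediate from the construction: $\widetilde\gamma_{{\bf a},{\bf P}_0} = \pi_{{\bf P}_0}^{-1}(\gamma_{{\bf a},{\bf P}_0})$, where $\gamma_{{\bf a},{\bf P}_0} = \bigcup_i \rho^i(\gamma_0)$ is by definition $\rho$-invariant as a family of billiard trajectories in ${\bf P}_0$, and $\rho$ on $R_0$ covers $\rho$ on ${\bf P}_0$ via $\pi_{{\bf P}_0}$; so $\rho$ maps $\widetilde\gamma_{{\bf a},{\bf P}_0}$ onto itself, permuting its components. Since the sum of lengths of geodesic representatives is invariant under permuting the family, we get $L_{\widetilde\gamma_{\bf a}}(\rho_*(x)) = L_{\widetilde\gamma_{\bf a}}(x)$.

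Next I would treat the $\mathcal F$-invariance, which is slightly more delicate because the flip map, by \eqref{eq:flip}, involves the orientation-reversal $j_S: S\to S^*$ and precomposition by the anti-holomorphic involution $J_0: R_0\to R_0$: $\mathcal F([S,f]) = [S^*, j_S\circ f\circ J_0]$. Since the geodesic length function on a hyperbolic surface is insensitive to the orientation (a closed geodesic in $S$ and the same geodesic in $S^*$ have equal length, as the metrics agree), we have $L_{\widetilde\gamma_{\bf a}}(\mathcal F(x)) = \sum_i \mathrm{length}$ of the geodesic representative in $S$ of $j_S\circ f\circ J_0(\widetilde\gamma_i)$, which equals $\sum_i\mathrm{length}$ of the geodesic representative in $S$ of $f(J_0(\widetilde\gamma_i))$. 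So again it suffices to show $J_0$ permutes the family $\widetilde\gamma_{\bf a}$ up to free homotopy. Here I would use that $J_0 = J_{{\bf P}_0}$ is the reflection interchanging ${\bf P}_0^+$ and ${\bf P}_0^-$, with fixed-point set the red lines $\bigcup c_i$, and that it covers the identity on ${\bf P}_0$ under $\pi_{{\bf P}_0}$ (it is the deck transformation of the double cover $S_{{\bf P}_0}\to{\bf P}_0$). Therefore $J_0$ preserves $\pi_{{\bf P}_0}^{-1}(\gamma_{{\bf a},{\bf P}_0}) = \widetilde\gamma_{{\bf a},{\bf P}_0}$ as a set; being an isometry of $R_0$ it carries geodesics to geodesics, so it permutes the components of $\widetilde\gamma_{\bf a}$. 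Hence $L_{\widetilde\gamma_{\bf a}}(\mathcal F(x)) = L_{\widetilde\gamma_{\bf a}}(x)$.

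The main obstacle I anticipate is purely bookkeeping rather than conceptual: one must be careful that $\widetilde\gamma_{\bf a}$ is defined as the lift to $R_0 = S_{{\bf P}_0}$ of the cyclically related billiard trajectories in the \emph{regular} polygon ${\bf P}_0$ — even when one ultimately evaluates $L_{\widetilde\gamma_{\bf a}}$ at a point $[S_{\bf P},f_{\bf P}]$ coming from a different polygon ${\bf P}$ — so the relevant symmetries $\rho$ and $J_0$ to exploit are those of $R_0$, not of the abstract surface $S$ at the point $x$. Once this is kept straight, both statements reduce to the single remark that an isometry of $R_0$ which maps the family $\widetilde\gamma_{\bf a}$ to itself (up to homotopy and orientation, and up to permutation of components) does not change the summed geodesic length function, because that function is defined on free homotopy classes and the defining data of the function depends only on the (unordered) family. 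I would write the whole argument in a few lines, citing Definition \ref{def:indandflip}, equation \eqref{eq:flip}, Definition \ref{dfn:geodlengthf}, and the $\rho$-invariance and $J_0$-invariance of the family $\gamma_{{\bf a},{\bf P}_0}$ (the former by its very definition as a union of $\rho$-orbits, the latter because $\gamma_{{\bf a},{\bf P}_0}$ lives downstairs in ${\bf P}_0$ and $J_0$ covers $\mathrm{id}_{{\bf P}_0}$).
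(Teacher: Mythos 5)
Your proposal is correct and follows essentially the same route as the paper: the authors likewise observe that $\widetilde\gamma_{\bf a}$ is, by construction, invariant as a set under $\rho$ and $J_0$, so that $\rho$ and $J_0$ merely permute the curves $\widetilde\gamma_i$, and then read off the invariance of the summed geodesic length from Definition \ref{def:indandflip} and \eqref{eq:flip}. Your extra care about the orientation reversal $j_S$ and about $J_0$ covering the identity on ${\bf P}_0$ only makes explicit what the paper's one-line ``follows analogously'' leaves to the reader.
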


\begin{proof}
  Note that
  $\widetilde \gamma_{\bf a} = \{ \widetilde \gamma_1,\dots,
  \widetilde \gamma_N \}$ with $N=k$ or $N=2k$ is a family of closed
  geodesics in $R_0$ which, as a set, is invariant under $\rho$ and
  $J_0$ by its very construction. If $x = [S,f:R_0 \to S]$ then
  $\rho_*(x) = [S,f \circ \rho: R_0 \to S]$ and we have
  $$ L_{\widetilde \gamma_{\bf a}}(\rho_*(x)) = 
  \sum_{i=1}^N {\rm length}(\widetilde \gamma_i'), $$ 
  where $\widetilde \gamma_i'$ is freely homotopic to
  $f \circ \rho(\widetilde \gamma_i)$. The result for $\rho$ follows
  now from the the fact that $\rho$ only permutes the closed curves
  $\widetilde \gamma_i$. The result for the flip map $\mathcal F$ follows
  analogously from the fact that also $J_P$ only permutes the closed
  curves $\widetilde \gamma_i$.
\end{proof}

\section{Proof of Theorem \ref{thm:main}}

As before, let ${\bf P}_0 \subset {\mathbb D}^2$ be a labelled regular
ideal $k$-gon and ${\bf a}$ be a finite billiard sequence. Theorem
\ref{thm:main} in the Introduction states that
\begin{equation} \label{eq:mainres}
L_{av}({\bf P},{\bf a}) \ge L_{av}({\bf P}_0,{\bf a}) 
\end{equation}
for all ideal $k$-gons ${\bf P} \subset {\mathbb D}^2$ with equality
if and only if $P$ is regular. Recall that $\widetilde \gamma_{\bf a}$ is a
family of closed geodesics in $R_0$ associated to the billiard
sequence $\bf a$ and that
$x_0 = [R_0, {\rm id}] \in B(R_0) \subset T(R_0)$. Then
\eqref{eq:mainres} is a consequence of the following, by identity
\eqref{eq:avlength-teichm}: For any finite billiard sequence
${\bf a}$, the geodesic length function associated to
$\widetilde \gamma_{\bf a}$ satisfies
\begin{equation} \label{eq:mainres-teichm} 
L_{\widetilde \gamma_{\bf a}}(x) \ge L_{\widetilde \gamma_{\bf a}}(x_0), 
\end{equation}
with equality iff $x=x_0$. So our goal is to prove \eqref{eq:mainres-teichm}.

Let us return to the property of closed curves to be filling, but now
in the setting of the Riemann surface $R_0$.

\begin{dfn} A family of closed curves
  $\{ \widetilde \gamma_1,\dots,\widetilde \gamma_N \}$ in $R_0$ is
  called {\em filling} if each connected component of
  $R_0 \backslash \bigcup_{i=1}^N \widetilde \gamma_i$ is
  topologically an open disc or a once-puctured open disc.
\end{dfn}

The importance of being filling becomes clear in the following result
by Kerckhoff.

\begin{prop}[{\cite[Lemma 3.1]{Ker83}}] \label{prop:proper} Let
  $\{ \widetilde \gamma_1,\dots,\widetilde \gamma_N \}$ be a finite
  family of closed curves and $L: T(R_0) \to (0,\infty)$ be the
  associated geodesic length function, introduced in Definition
  \ref{dfn:geodlengthf}. If this family is filling, then $L$ is a
  \emph{proper} function.
\end{prop}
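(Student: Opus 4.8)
We refer to \cite{Ker83} for the original proof; here is the argument I would give. Properness of $L$ is exactly the statement that every sublevel set $\{x\in T(R_0):L(x)\le C\}$ is compact, equivalently that $L(x_m)\to\infty$ along every sequence $(x_m)$ in $T(R_0)$ that eventually leaves each compact subset. So I would fix such a sequence and try to produce from it a single nonzero measured lamination witnessing the blow-up. The plan is to pass to the Thurston compactification $\overline{T(R_0)}=T(R_0)\cup\mathcal{PML}(R_0)$, in which $T(R_0)$ is embedded via the projectivised length functions $x\mapsto[\,\alpha\mapsto\ell_x(\alpha)\,]$ and whose boundary consists of the projective classes of nonzero compactly supported measured geodesic laminations. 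Since $\overline{T(R_0)}$ is compact, after passing to a subsequence I may assume $x_m\to[\mu]\in\mathcal{PML}(R_0)$, and since $(x_m)$ leaves every compact subset of $T(R_0)$ the limit genuinely lies on the boundary; the defining property of the compactification then provides scalars $\lambda_m>0$ with $\lambda_m\,\ell_{x_m}(\alpha)\to i(\alpha,\mu)$ for every closed curve $\alpha$, and $\lambda_m\to 0$ because the limit is a boundary point. Consequently $\ell_{x_m}(\delta)\to\infty$ for every closed curve $\delta$ with $i(\delta,\mu)>0$, and summing over the family,
$$
L(x_m)=\sum_{i=1}^N\ell_{x_m}(\widetilde\gamma_i)\longrightarrow\infty
\qquad\text{provided}\qquad
i(\widetilde\gamma,\mu):=\sum_{i=1}^N i(\widetilde\gamma_i,\mu)>0.
$$

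It remains to verify $i(\widetilde\gamma,\mu)>0$, and this is the only step where the filling hypothesis is used. If it failed, then after realising all the $\widetilde\gamma_i$ and the lamination $\mu$ as geodesics for a fixed hyperbolic structure on $R_0$ (so that they are in minimal position), the support of $\mu$ would be disjoint from $\bigcup_{i=1}^N\widetilde\gamma_i$ and hence contained in a single connected component $U$ of $R_0\setminus\bigcup_{i}\widetilde\gamma_i$. Because $\widetilde\gamma$ fills $R_0$, the component $U$ is an open disc or a once-punctured disc, and neither surface can carry the support of a nonzero compactly supported geodesic lamination (a disc carries no complete geodesic lamination at all; a once-punctured disc carries only the loop around its cusp, which is excluded). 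This contradiction yields $i(\widetilde\gamma,\mu)>0$, and together with the previous paragraph it shows $L(x_m)\to\infty$, so every sublevel set is compact and $L$ is proper.

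I expect the genuinely delicate points to be the two pieces of Teichm\"uller machinery that must be cited with care --- compactness of the Thurston boundary and the fact that $\ell_{x_m}(\delta)\to\infty$ when $i(\delta,\mu)>0$ along a degenerating sequence --- rather than the topology, which is essentially the definition of filling. As a cross-check I would also run a hands-on argument in the Fenchel--Nielsen coordinates $(l,\tau)\in({\mathbb R}^+\times{\mathbb R})^{k-3}$ of Section~3: a divergent sequence has, up to subsequence, some $l_j\to 0$, some $l_j\to\infty$, or some $|\tau_j|\to\infty$, and filling forces $i(\widetilde\gamma,C_j)\ge 1$ for every pants curve $C_j$ (a curve disjoint from $\widetilde\gamma$ would be inessential or peripheral, exactly as above). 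The Collar Lemma disposes of $l_j\to 0$, since each strand of $\widetilde\gamma$ crossing $C_j'$ must traverse a collar of width tending to infinity; a winding estimate disposes of $|\tau_j|\to\infty$, since the crossing strands wrap around $C_j'$ and acquire length at least a fixed constant times $|\tau_j|\,i(\widetilde\gamma,C_j)$. For $l_j\to\infty$ one observes that the at most $i(\widetilde\gamma,C_j)$ intersection points cut $C_j'$ into that many arcs, one of which has length at least $l_j/i(\widetilde\gamma,C_j)$; this arc has both endpoints on the boundary of a single complementary component $U$ of $R_0\setminus\widetilde\gamma'$, and since a geodesic arc in the simply connected lift of $U$ inside the universal cover ${\mathbb D}^2$ is no longer than either boundary arc joining its endpoints, its length is at most the perimeter of $U$, hence at most a fixed multiple of $L(x_m)$, so $L(x_m)\to\infty$ in this case too. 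The one subtlety in the Fenchel--Nielsen variant is a long arc sitting inside a once-punctured-disc component; here simplicity of $C_j'$ forces that arc to be boundary-parallel in $U$ once the canonical cusp neighbourhood is discarded, which restores the perimeter bound.
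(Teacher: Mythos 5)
The paper does not prove this proposition at all --- it is imported verbatim as \cite[Lemma 3.1]{Ker83} --- so there is no in-house argument to compare against; what you have written is a genuine proof sketch where the paper has only a citation. Your primary (Thurston-boundary) argument is the standard modern route and is essentially sound, but the one point you flag as ``delicate'' is in fact a real gap rather than a matter of careful citation: the defining property of the Thurston compactification gives $\lambda_m\,\ell_{x_m}(\alpha)\to i(\alpha,\mu)$ only for \emph{simple} closed curves $\alpha$, whereas the curves $\widetilde\gamma_i$ relevant here (lifts of billiard trajectories) are typically non-simple. Extending the asymptotic to non-simple curves requires an additional tool --- Bonahon's geodesic currents and the continuity of the intersection form (itself needing care on cusped surfaces, where the Liouville current has infinite mass), or else a surgery/collar argument bounding $\ell_x(\delta)$ from below by intersection numbers with simple curves. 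A second, smaller gap: $i(\widetilde\gamma_i,\mu)=0$ for all $i$ does not immediately force $\operatorname{supp}\mu$ to be disjoint from $\bigcup_i\widetilde\gamma_i$, since a simple member of the family could itself be a leaf of $\mu$; this case is excluded because such a curve would then be disjoint from all the others and would bound a disc or once-punctured disc on each side, contradicting essentiality, but it should be said.

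Your Fenchel--Nielsen cross-check is, for that reason, the more self-contained of your two arguments and arguably the better one to develop in full: the trichotomy $l_j\to 0$, $|\tau_j|\to\infty$, $l_j\to\infty$ exhausts divergence because $(l,\tau)$ is a global chart; $i(\widetilde\gamma,C_j)\ge 1$ follows from filling exactly as you say; the collar estimate and the twisting estimate are standard; and your perimeter bound for the $l_j\to\infty$ case (a geodesic subarc of $C_j'$ of length $\ge l_j/i(\widetilde\gamma,C_j)$ lies in the closure of a single complementary disc and is therefore no longer than a boundary path, whose length is at most $2L(x_m)$) is correct and pleasantly elementary, with the once-punctured-disc subtlety resolved as you indicate by choosing the boundary arc on the side of $C_j'$ not containing the puncture. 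Either route, written out with these points addressed, would constitute a complete proof; as it stands the proposal is a correct skeleton with the load-bearing analytic input identified but not supplied.
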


Proposition \ref{prop:proper} and Theorem \ref{thm:wolpert} together
imply the following corollary. The proof of this corollary is well
known (see, e.g., last paragraph of \cite{Wol87} or also \cite[Thm
3]{Ker83}) but we include it here for the reader's convenience.

\begin{cor} \label{cor:uniqmin}
  Let$\{ \widetilde \gamma_1,\dots,\widetilde \gamma_N \}$ be a finite
  family of closed essential curves which is filling and
  $L: T(R_0) \to (0,\infty)$ be the associated geodesic length
  function. Then there is a unique point $x_{min} \in T(R_0)$ where
  $L$ assumes its global minimum.
\end{cor}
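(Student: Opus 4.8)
The plan is to combine the two imported results—Wolpert's strict convexity (Theorem~\ref{thm:wolpert}) and Kerckhoff's properness (Proposition~\ref{prop:proper})—in the standard way: properness forces a minimum to exist on the (incomplete but geodesically path-connected) Teichm\"uller space, and strict convexity along Weil-Petersson geodesics forces it to be unique. First I would invoke Proposition~\ref{prop:proper}: since the family $\{\widetilde\gamma_1,\dots,\widetilde\gamma_N\}$ is filling, the geodesic length function $L$ is proper, i.e.\ preimages of compact sets are compact. Hence for any value $c$ in the range of $L$, the sublevel set $\{x \in T(R_0): L(x) \le c\}$ is compact and nonempty, so $L$ attains its infimum at some point $x_{min} \in T(R_0)$.

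For uniqueness, suppose $x_1 \neq x_2$ are both global minimisers, so $L(x_1) = L(x_2) = \min L =: m$. By the result of Wolpert (\cite{Wol87}) quoted just before the proposition on the billiard space, any two points of $T(R_0)$ are joined by a unique Weil-Petersson geodesic $\sigma:[0,1]\to T(R_0)$ with $\sigma(0)=x_1$, $\sigma(1)=x_2$. By Theorem~\ref{thm:wolpert}, $t\mapsto L(\sigma(t))$ is strictly convex on $[0,1]$, so for $t\in(0,1)$ we get $L(\sigma(t)) < (1-t)L(x_1) + t L(x_2) = m$, contradicting the minimality of $m$. Therefore the minimiser is unique, and we may call it $x_{min}$.

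The only subtlety worth a sentence is that $T(R_0)$ with $g_{WP}$ is not complete, so one cannot quote a blanket ``proper continuous function on a complete space attains its minimum'' statement; the argument instead uses properness directly (compact nonempty sublevel sets) together with continuity of $L$, both of which are guaranteed by Proposition~\ref{prop:proper} and Theorem~\ref{thm:wolpert}. I expect no real obstacle here—the corollary is genuinely a formal consequence of the two cited theorems, as the paper itself notes—so the write-up is short. For extra robustness one can also observe that $\{x : L(x)\le c\}$ is itself convex along Weil-Petersson geodesics (again by strict convexity of $L$ along geodesics), which gives an alternative route to uniqueness, but the two-minimiser contradiction above is the cleanest presentation.
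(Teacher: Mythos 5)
Your proposal is correct and follows essentially the same route as the paper: properness of $L$ (via Kerckhoff) gives compact sublevel sets and hence existence of a minimiser, and strict convexity along the unique Weil--Petersson geodesic joining two hypothetical minimisers (via Wolpert) gives uniqueness. The paper phrases existence through a minimising sequence in $L^{-1}([0,L_0+1])$ rather than through sublevel sets, but this is the same argument.
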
 

\begin{proof}
  Let $L_0 = \inf\{ L(x) \mid x \in T(R_0) \} \ge 0$ and
  $x_m \in T(R_0)$ be a sequence satisfying
  $\lim_{m \to \infty} L(x_m) \to L_0$. Since $L$ is proper, by
  Proposition \ref{prop:proper}, $L^{-1}([0,L_0+1])$ is compact and
  there exists a convergent subsequence
  $x_{m_j} \to x_{min} \in T(R_0)$ with
$$ 0 < L(x_{min}) = \lim_{j \to \infty} L(x_{m_j}) = L_0. $$ 
Assume we have another point $x' \in T(R_0)$ with $L(x') = L_0$. Then
there exists a unique geodesic connecting $x_{min}$ and $x'$, along
which $L$ is strict convex, by Theorem \ref{thm:wolpert}. This would
lead to a point $x'' \in T(R_0)$ between $x_{min}$ and $x'$ with
$L(x'') < L(x_{min}) = L_0$, which is a contradiction.
\end{proof}

Let us, finally, present the proof of \eqref{eq:mainres-teichm}: We
first explain why
$\widetilde \gamma_{\bf a} = \{ \widetilde \gamma_1, \dots, \widetilde
\gamma_N \}$ with $N = k$ or $N=2k$ is filling in $R_0$. We know from
Proposition \ref{prop-fill} that if $\gamma_0$ denotes the closed
billiard trajectory corresponding to $\bf a$ in $\bf P_0$ and
$\gamma_i = \rho^i(\gamma_0)$, then
$\gamma = \bigcup_{i=0}^{k-1} \gamma_i$ fills ${\bf P}_0$. Now, $R_0$
consists of two copies ${\bf P}_0^\pm$ of ${\bf P_0}$, glued along
their boundaries. Under the identification ${\bf P}_0 = {\bf P}_0^+$,
we have
$$ R_0 \backslash \bigcup_{i=1}^N \widetilde \gamma_i = 
\left( {\bf P}_0 \backslash \bigcup_{i=0}^{k-1} \gamma_i \right) \cup
J_0 \left( {\bf P}_0 \backslash \bigcup_{i=0}^{k-1} \gamma_i
\right), $$ and from the domains with properties (a), (b), (c) in
Definition \ref{def-fill} it is easy to see that the connected
components of $R_0 \backslash \bigcup_{i=1}^N \widetilde \gamma_i$ are
either topologically an open disc or a once-puctured open disc. This
shows that $\widetilde \gamma_{\bf a}$ is filling. Moreover, the
geodesics $\widetilde \gamma_i$ are essential and we conclude from
Corollary \ref{cor:uniqmin} that there exists a \emph{unique} point
$x_{min} \in T(R_0)$ with
$$ L(x) > L(x_{min}) \qquad \text{for all $x \in T(R_0), x \neq x_{min}$,} $$ 
where $L$ denotes the geodesic length function associated to
$\widetilde \gamma_{\bf a}$. It only remains to identify this global
minimum.  We know from Lemma \ref{lm:inv-length} that
$L(x) = L(\rho_*(x)) = L({\mathcal F}(x))$, and the uniqueness of the
minimum implies that we have
$$ x_{min} = \rho_*(x_{min}) = {\mathcal F}(x_{min}). $$
It then follows from Proposition \ref{prop:charx0} that we must have
$x_0=x_{min}$.  \qed

\appendix

\section{Billiard in Euclidean rectangles}
\label{app:euclrect}

In this appendix, we discuss an Euclidean analogue of the conjecture,
namely we consider lengths of cyclically related closed billiard trajectories in
Euclidean rectangles of area one. For every $c > 0$, we introduce the
rectangular billiard table
${\bf P}_{c} = [0,c] \times [0,1/c] \subset {\mathbb R}^2$. Every
closed billiard trajectory in ${\bf P}_c$ is, up to free homotopy, in
one-one correspondence with a vector $(nc,m/c)$ with
$(n,m) \in {\mathbb Z}^2\backslash \{(0,0)\}$. The closed billiard
trajectories cyclically related to $(nc,m/c)$ are $(-mc,n/c)$,
$(-nc,-m/c)$ and $(mc,-n/c)$. The lengths of these four cyclically
related billiard trajectories add up to
$$
L_{n,m}(c) = 2 \sqrt{n^2 c^2 + \frac{m^2}{c^2} } + 2 \sqrt{m^2 c^2 +
  \frac{n^2}{c^2} }.
$$  
The Euclidean analogue of the conjecture in this ''baby case'' then
reads as
\begin{equation} \label{eq:babybill} L_{n,m}(c) \ge L_{n,m}(1),
\end{equation}
with equality if and only if $c=1$. \eqref{eq:babybill} is equivalent
to
$$ \sqrt{n^2 c^2 + \frac{m^2}{c^2} } + \sqrt{m^2 c^2 + \frac{n^2}{c^2} } \ge 2 \sqrt{n^2 + m^2}. $$
Squaring both sides leads to
$$ 2 \sqrt{n^2 c^2 + \frac{m^2}{c^2} } \sqrt{m^2 c^2 + \frac{n^2}{c^2} } \ge 2 (n^2+m^2) - \left( c- \frac{1}{c} \right) (n^2 + m^2). $$
This shows that we have \eqref{eq:babybill} if
$$ \sqrt{n^2 c^2 + \frac{m^2}{c^2} } \sqrt{m^2 c^2 + \frac{n^2}{c^2} } \ge (n^2+m^2). $$
Squaring again yields
$$ \left( c^4 + \frac{1}{c^4} \right) n^2 m^2 \ge 2 n^2 m^2, $$
which holds obviously for all $c > 0$. It is easy to see that the
equality case leads to $c=1$, completing the elementary proof.

Recall that in the case of {\em hyperbolic polygons} we associated to
every billiard table a billiard surface. Let us briefly explain what
this means in our case: Reflections of the billiard table ${\bf P}_c$
along its sides leads to the rectangle $[0,2c] \times [0,2/c]$ which,
after identification of its opposite sides, becomes a torus, denoted
by $S_c$, the billiard surface associated to the billiard table
${\bf P}_c$. Then every closed billiard trajectory, traversed twice,
can be viewed as a closed geodesic in $S_c$. Such a closed geodesic is
then again, up to free homotopy, in one-one correspondence with a
vector $(2nc,2m/c)$. Our inequality above about cyclically related
closed billiard trajectories then naturally translates to a
corresponding statement about closed geodesics in the associated
billiard surfaces. The relevant Teich\-m{\"u}ller space is then the
space of all closed flat oriented surfaces (of genus $1$), which we
can identify with the hyperbolic upper half plane
${\mathbb H}^2 = \{ z \in {\mathbb C} \mid {\rm Im}(z) > 0\}$. More
concretely, we associate to every point $\tau \in {\mathbb H}^2$ the
lattice $\Gamma_{\tau}$ generated by $1$ and $\tau$, and we multiply
this lattice by a suitable mulitplicative factor, then denoted by
$\widetilde \Gamma_{\tau}$, to have covolume $4$. Then the point
$\tau \in {\mathbb H}^2$ corresponds to the marked flat surface
${\mathbb R}^2 / \widetilde \Gamma_{\tau}$. In particular, the marked
billiard surface $S_c$ corresponds to the point
$i/c^2 \in {\mathbb H}^2$, and the Weil-Petersson metric $g_{WP}$ at
$z = x + iy \in {\mathbb H}^2$ agrees, up to a multiplicative factor,
with the hyperbolic metric $\frac{dx^2+dy^2}{y^2}$ (see \cite[Section
7.3.5]{IT}). The positive vertical imaginary axis in ${\mathbb H}^2$
is therefore a Weyl-Petersson geodesic. Since this axis represents the
set of all marked billiard surfaces, we can confirm in this case that
the space of all marked billiard surfaces is a geodesically convex set
in the Teich\-m{\"uller} space ${\mathbb H}^2$.

We finish this appendix by the remark that the restriction to
\emph{Euclidean rectangles} of area one is essential: Let us consider
the bigger class of Euclidean quadrilaterals of area one (dropping the
requirement that all angles are equal to $\pi/2$). Then Figure
\ref{fig:quad-bill} illustrates that the square is no longer
necessarily the billiard table which minimises the total length of
cyclically related closed billiard trajectories: the total length of
all billiard trajectories cyclically related to the finite billiard
sequence $(1,3)$ is obviously smaller in the parallelogram. Note also
that reflections of the parallelogram along its sides does no longer
lead to a tessellation of the Euclidean plane and, therefore, we
cannot construct a billiard surface (flat torus) from this billiard
table by the above mentioned method.

\begin{figure}
  \begin{center}
    \psfrag{1}{$1$} 
    \psfrag{2}{$2$}
    \psfrag{3}{$3$} 
    \psfrag{4}{$4$}
    \includegraphics[height=5cm]{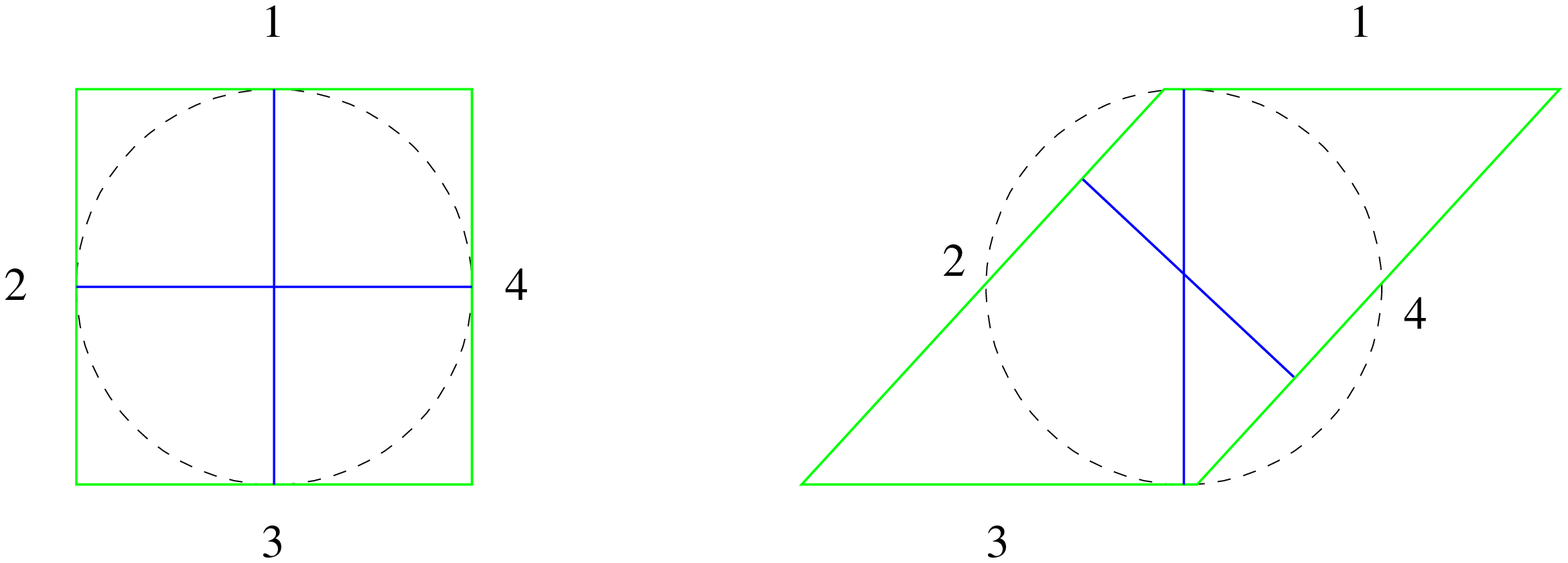}
    \caption{Closed cyclically related billiard trajectories to the
      billiard sequences $(1,3)$, $(2,4)$, $(3,1)$, and $(4,2)$. The billiard
    tables are the square and a parallelogram of area one.
   \label{fig:quad-bill}}
  \end{center}
\end{figure}

\end{document}